\documentclass{amsart}

\title[A new kind of anti-foundation axioms]%
      {A new kind of anti-foundation axioms}

\author[Ju]{Daheng Ju}
 \address[Daheng Ju]
         {Department of Philosophy and Religious Studies, Peking University, 5 Yiheyuan Road, Beijing, 100871 China}
 \email{judaheng@pku.edu.cn}

\usepackage{latexsym,amsfonts,amsmath,amssymb,mathrsfs,tikz-cd,adjustbox}
\usetikzlibrary{fit,decorations.pathmorphing,backgrounds}
\usepackage{changepage,calc}
\usepackage{csquotes}
\usepackage{braket}
\usepackage{bbm}
\usepackage[hidelinks]{hyperref}
\usepackage{relsize}
\usepackage[cmyk,svgnames,dvipsnames]{xcolor}
\usepackage{tikz}
\usetikzlibrary {3d}
\usepackage{comment}
\usetikzlibrary{arrows,arrows.meta,petri,topaths,positioning,shapes,shapes.misc,patterns,calc,decorations.pathreplacing,hobby}
\pgfdeclarelayer{foreground}
\pgfdeclarelayer{background}
\pgfdeclarelayer{boardarrows}
\pgfdeclarelayer{boardgrid}
\pgfdeclarelayer{boardshades}
\pgfsetlayers{boardshades,boardgrid,boardarrows,background,main,foreground}
\usepackage{wrapfig} 
\usepackage{float}
\usepackage[utf8]{inputenc}
\RequirePackage{doi}

\usepackage{enumitem}

\usepackage[backend=bibtex,style=alphabetic,maxbibnames=15,maxcitenames=6,dateabbrev=false]{biblatex}
\renewcommand{\UrlFont}{} 
\renewbibmacro{in:}{\ifentrytype{article}{}{\printtext{\bibstring{in}\intitlepunct}}} 
\DeclareFieldFormat{url}{\UrlFont\url{#1}} 
\DeclareFieldFormat{urldate}{
  (version \thefield{urlday}\addspace%
  \mkbibmonth{\thefield{urlmonth}}\addspace%
  \thefield{urlyear}\isdot)}
\DeclareFieldFormat{eprint:arxiv}{
  \ifhyperref
    {\href{http://arxiv.org/abs/#1}{%
        arXiv\addcolon\nolinkurl{#1}}\iffieldundef{eprintclass}{}{\UrlFont{\mkbibbrackets{\thefield{eprintclass}}}}}
    {arXiv\addcolon\nolinkurl{#1}\iffieldundef{eprintclass}{}{\UrlFont{\mkbibbrackets{\thefield{eprintclass}}}}}}

\newcommand{\ZFC}{\mathsf{ZFC}}
\newcommand{\zfc}{\mathsf{ZFC}}
\newcommand{\AF}{\mathsf{AF}}
\newcommand{\BAFA}{\mathsf{BAFA}}
\newcommand{\FAFA}{\mathsf{FAFA}}
\newcommand{\SAFA}{\mathsf{SAFA}}
\newcommand{\AFA}{\mathsf{AFA}}
\newcommand{\afa}{\mathsf{AFA}}
\newcommand{\trcl}{\text{trcl}}
\DeclareMathOperator{\uni}{\mathbf{V}}
\DeclareMathOperator{\wf}{\mathbf{WF}}
\DeclareMathOperator{\con}{Con}

\newtheorem{theorem}{Theorem}
\newtheorem{lemma}{Lemma}
\newtheorem{definition}{Definition}
\newtheorem{fact}{Fact}
\newtheorem{corollary}{Corollary}
\newtheorem{question}{Question}

\begin{document}

\begin{abstract}

Among the four well-known anti-foundation axioms, $\BAFA$, $\FAFA$, $\SAFA$, and $\AFA$, the latter three are all special cases of $\AFA^\sim$ ($\sim$ is one of the regular bisimulations). In this paper, we generalize it to $\AFA^\sim$ ($\sim$ is regular and has the so-called set property), and show that this generalization is substantial by constructing new anti-foundation axioms.

\end{abstract}

\maketitle

\tableofcontents

\newcommand\JD[1]{{\color{blue}(JD: #1)}}

\section{Introduction}

In non-well-founded set theory, we replace the axiom of foundation $\AF$ in $\ZFC$ with one of the anti-foundation axioms asserting the existence of non-well-founded sets. There are four well-known anti-foundation axioms: $\BAFA$, $\FAFA$, $\SAFA$, and $\AFA$, where the latter three are all special cases of $\AFA^\sim$ ($\sim$ is one of the regular bisimulations). For this reason, anti-foundation axioms are often identified with $\BAFA$ and $\AFA^\sim$ ($\sim$ is one of the regular bisimulations), and the study of anti-foundation axioms is often identified with the study of them (see, for example, \cite{Daghighi}\cite{Ju}).

In this paper, we will demonstrate the limitations of using regular bisimulations to induce anti-foundation axioms. In particular, we consider the following rigorous question:

\begin{question}[finitary arithmetic]
If $\zfc^-\vdash$``$\sim$ is a regular graph relation'' and $\zfc^-+\afa^\sim$ is consistent relative to $\zfc^-$, does there necessarily exist some $\sim'$ such that $\zfc^-\vdash$``$\sim'$ is a regular bisimulation and $\afa^\sim\Leftrightarrow\afa^{\sim'}$''?
\end{question}

By constructing new anti-foundation axioms, we will give a negative answer to this question.

Unless otherwise specified, throughout this paper, we work in $\ZFC-\AF$, abbreviated as $\ZFC^-$.

\section{A brief introduction to anti-foundation axioms}

In this section, we give a brief introduction to $\BAFA$ and $\AFA^\sim$ ($\sim$ is one of the regular bisimulations).

The standard form of an anti-foundation axiom is ``a graph is an exact picture iff such-and-such''. In the context of non-well-founded set theory, ``graph'' means ``accessible pointed directed graph'', that is, a directed graph $G$ together with a distinguished vertex called the point (i.e. a structure $(V_G$(also denoted by $G$)$,E_G,p_G)$) where for every $a \in G$, there exists a (finite) path from $p_G$ to $a$, and $G$ is called an exact picture (of some set $X$) if it is isomorphic with $G_X = (\trcl(\{X\}), \in, X)$, called the canonical picture of $X$.

As Incurvati says:
\begin{quote}
Since every set has, up to isomorphism, one and only one exact picture, we can know which sets there are if we know which apgs are exact pictures. Each non-well-founded set theory gives a different answer to this question, and the role of the anti-foundation axiom can be seen as precisely that of specifying which apgs are exact pictures.\cite{Incurvati}
\end{quote}

We denote the set of children of $a$ by $C_G(a)$. $G$ is called extensional if for every $a_1,a_2 \in G$, $C_G(a_1) = C_G(a_2)$ implies $a_1 = a_2$. By the axiom of extensionality, exact pictures are extensional. $\BAFA$ is equivalent with the conjunction of two propositions $\mathsf{BA}_1$ and $\mathsf{BA}_2$, where $\mathsf{BA}_1$ is the proposition ``a graph is an exact picture iff it is extensional''.

Given $a \in G$, we denote by $G[a]$ the subgraph below $a$, that is, the subgraph induced by the set $\{ b \in G \mid \text{there exists a path from } a \text{ to } b \}$ having $a$ as point. Such subgraphs are called descendant subgraphs. Given a binary relation between graphs (a graph relation for short) $\sim$, which is an $\mathcal{L}_\in$-formula $\sim(x,y)$, $G$ is called $\sim$-extensional if for every $a_1,a_2 \in G$, $G[a_1] \sim G[a_2]$ implies $a_1 = a_2$.\footnote{It is worth noticing that, write $G\approx G'$ if $\{G[a]\mid a\in C_G(p_G)\}=\{G'[a']\mid a'\in C_{G'}(p_{G'})\}$, then being extensional is equivalent with being $\approx$-extensional.} An anti-foundation axiom is called normal in \cite{Ju} if it demands that ``every exact pictures is $\cong$-extensional'', which is equivalent with ``every exact picture is an exact picture of one and only one set''.

The simplest normal anti-foundation axiom is $\FAFA$: a graph is an exact picture iff it is extensional and $\cong$-extensional. Next, we introduce regular bisimulations, which can systematically induce normal (and relatively consistent) anti-foundation axioms, including $\FAFA$, $\SAFA$, and $\AFA$.

\begin{definition}
    A graph relation $\sim$ is called regular if the following hold:
\begin{enumerate}[label=(\roman*)]
    \item\ $\sim$ is an equivalence relation.
    \item\ $G \approx G'$ implies $G \sim G'$.
    \item\ $G \cong G'$ implies $G \sim G'$.
\end{enumerate}
\end{definition}

Every regular graph relation $\sim$ induces a normal anti-foundation axiom $\AFA^\sim$: a graph is an exact picture iff it is $\sim$-extensional. $\AFA^\sim$ is the conjunction of two propositions:

\begin{itemize}
    \item ($\AFA_1^\sim$) A graph is an exact picture if it is $\sim$-extensional.
    \item ($\AFA_2^\sim$) A graph is an exact picture only if it is $\sim$-extensional.
\end{itemize}

\begin{definition}
    A graph relation $\sim$ is called a bisimulation if $G \sim G'$ implies the following:

\begin{enumerate}[label=(\roman*)]
    \item\ For every $a \in C_G(p_G)$, there exists some $a' \in C_{G'}(p_{G'})$ such that $G[a] \sim G'[a']$.
    \item\ For every $b' \in C_{G'}(p_{G'})$, there exists some $b \in C_G(p_G)$ such that $G[b] \sim G'[b']$.
\end{enumerate}
\end{definition}

Aczel noted that $\FAFA$, $\SAFA$, and $\AFA$ can all be induced by one of the regular bisimulations.\cite{Aczel}

First, consider $\FAFA$. Given a graph $G$, we define $G^*$ as follows: if $p_G$ has no parents, then $G^* = G$; if $p_G$ has a parent, then $G^*$ is a new graph formed by adding to $G$ a new vertex $p_{G^*}$ and new edges $(p_{G^*}, a)$ for every $a \in C_G(p_G)$ having $p_{G^*}$ as point. We write $G \cong^* G'$ if $G^* \cong {G'}^*$. It's easy to check that:

\begin{fact} \mbox{}

\begin{enumerate}[label=(\arabic*)]
    \item\ Being $\cong^*$-extensional is equivalent with being both extensional and $\cong$-extensional. As a corollary, $\FAFA$ is equivalent with $\AFA^{\cong^*}$: a graph is an exact picture iff it is $\cong^*$-extensional.
    \item\ $\cong^*$ is the transitive closure of $\approx \cup \cong$ (thus, it is the minimum regular graph relation: $\cong^*$ is a regular graph relation, and if $\sim$ is a regular graph relation, then $G\cong^* G'\Rightarrow G\sim G'$).
    \item\ $\cong^*$ is a bisimulation (thus, it is the minimum regular bisimulation).
\end{enumerate}
\end{fact}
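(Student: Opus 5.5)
We prove the three items in order, working throughout with the explicit description of $G^*$. The basic observation is that $G^*$ has the same children of its point as $G$, and its point has no parents; moreover $G^*\cong G$ exactly when $p_G$ has no parents in $G$. The guiding idea is that $G\cong^* G'$ means ``$G$ and $G'$ are isomorphic once we forget that the point may be revisited.'' Equivalently, the points have matching children via an isomorphism of the graph formed by those children, with the point treated as a fresh root.

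For (1), let $a_1,a_2\in G$. The first task is to decide when $G[a_1]\cong^* G[a_2]$ holds. The point $a_i$ of $G[a_i]$ has a parent inside $G[a_i]$ iff $a_i$ lies on a cycle. We split into cases.
\begin{itemize}
\item If neither $a_i$ has a parent, then $\cong^*$ is just $\cong$.
\item If $C_G(a_1)=C_G(a_2)$, then $G[a_1]^*$ and $G[a_2]^*$ are both the fresh root over $\bigcup_{c\in C_G(a_1)} G[c]$. Note that $a_1$ itself may occur there as a descendant of a child. Hence they are isomorphic, and $\cong^*$-extensional implies extensional.
\item Also $\cong$ implies $\cong^*$ (an isomorphism extends to the added roots), so $\cong^*$-extensional implies $\cong$-extensional.
\end{itemize}
Conversely, suppose $G$ is extensional and $\cong$-extensional, and let $f\colon G[a_1]^*\cong G[a_2]^*$. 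Consider the map $f'$ on $G[a_1]$ that agrees with $f$ off the point and sends $a_1\mapsto a_2$. We check that $f'$ is an isomorphism $G[a_1]\cong G[a_2]$.
\begin{itemize}
\item First we need that $f$ sends each occurrence of the vertex $a_1$ (when $a_1$ lies below a child) to $a_2$. We get this from extensionality: $a_1$, taken as a vertex below a child, and the new root have ``the same children''. The images $f(a_1)$ and the root of $G[a_2]^*$ then have the same children in $G[a_2]^*$. Since non-root vertices live in the extensional graph $G[a_2]$, we get $f(a_1)=a_2$.
\item With this in hand, $\cong$-extensionality gives $a_1=a_2$.
\end{itemize}
The corollary for $\FAFA$ is then immediate from the definitions.

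For (2), first note that $\cong^*$ is an equivalence relation, and that it contains $\cong$ (shown above) and $\approx$. For $\approx$: if $\{G[a]\}=\{G'[a']\}$ as sets of descendant subgraphs of children, then the roots of $G^*$ and $G'^*$ have identical child-subgraphs. Gluing the identities gives an isomorphism; one has to check that the vertex sets agree, which holds because $G[a]=G'[a']$ literally. This shows $\cong^*\supseteq(\approx\cup\cong)^{\mathrm{tc}}$. For the converse, note that $G\approx G^*$, since they share the same children-subgraphs; here we use that $G^*[a]=G[a]$ for children $a$, because the new root is never reachable. Hence $G\approx G^*\cong G'^*\approx G'$. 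Minimality among regular relations follows at once.

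For (3), let $G^*\cong G'^*$ via $f$. A child $a$ of $p_G$ maps to a child $f(a)$ of $p_{G'}$, and $f$ restricts to $G[a]\cong G'[f(a)]$ because the added root is not reachable. Hence $G[a]\cong^* G'[f(a)]$, and symmetrically in the other direction, so $\cong^*$ is a bisimulation. Minimality among regular bisimulations follows from (2).

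The main obstacle is the converse direction of (1), namely the careful argument that an isomorphism of starred graphs can be converted into an honest isomorphism when $a_i$ is on a cycle, using extensionality.
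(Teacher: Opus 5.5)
The paper gives no proof of this Fact (it is introduced with ``It's easy to check''), so there is no argument to compare yours against. Your treatment of (2), (3), the forward half of (1), and the $\FAFA$ corollary is correct.

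The one step that does not go through as written is the converse of (1). Your map $f'$ is an isomorphism only when $a_1$ and $a_2$ are both on cycles or both off cycles, and you never exclude the mixed cases.

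\emph{$a_1$ on no cycle, $a_2$ on a cycle.} Here $f(a_1)=p_{G[a_2]^*}$, and your first bullet, which assumes $a_1$ lies below a child, says nothing. The map $f'$ you define is then not injective: $f^{-1}(a_2)\neq a_1$, yet $f'(f^{-1}(a_2))=a_2=f'(a_1)$.

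\emph{$a_1$ on a cycle, $a_2$ on no cycle.} Your bullet correctly yields $f(a_1)=a_2$. But $a_2$ is also $f(p_{G[a_1]^*})$, so this contradicts the injectivity of $f$; it cannot serve as input to constructing $f'$. You should record it as a contradiction rather than go on to ``check'' that $f'$ is an isomorphism.

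The repair uses exactly your extensionality idea. First show that if $G$ is extensional and $G[a_1]\cong^* G[a_2]$ via $f$, then $a_1$ lies on a cycle iff $a_2$ does. Suppose $a_2$ is on a cycle and $a_1$ is not. In $G[a_2]^*$ the vertices $a_2$ and $p_{G[a_2]^*}$ have the same children. Hence $f^{-1}(a_2)$ and $f^{-1}(p_{G[a_2]^*})=a_1$ have the same children in $G[a_1]^*=G[a_1]$, which is extensional. So $f^{-1}(a_2)=a_1$, i.e.\ $a_2=p_{G[a_2]^*}$, which is absurd because the latter is a fresh vertex. The other mixed case is symmetric. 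Once the mixed cases are ruled out, $f'$ is just the restriction of $f$ to $G[a_1]$, and your argument is complete.
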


Second, consider $\SAFA$. We denote by $G^t$ the unfolding of $G$ (for the precise definition, see \cite{Aczel}). We write $G \cong^t G'$ if $G^t \cong {G'}^t$. $\cong^t$ is a regular bisimulation. $\SAFA$ is just $\AFA^{\cong^t}$: a graph is an exact picture iff it is $\cong^t$-extensional.

Finally, consider $\AFA$. 
We define the maximum bisimulation $\equiv_\mathrm{M}$ as follows: let $G[G]=\{G[a]\mid a\in G\}$, $G\equiv_\mathrm{M} G'$ iff there exists some $B\subseteq (G[G]\cup G'[G'])\times(G[G]\cup G'[G'])$ such that $B$ is a bisimulation on $G[G]\cup G'[G']$ and $(G,G')\in B$. It's easy to check that $\equiv_\mathrm{M}$ is regular (thus, it is also the maximum regular bisimulation). $\AFA$ is just $\AFA^{\equiv_\text{M}}$: a graph is an exact picture iff it is $\equiv_\text{M}$-extensional.

Aczel proved that if $\sim$ is a regular bisimulation and has a certain absoluteness property, then $\AFA^\sim$ is consistent with $\ZFC^{-}$ (relative to $\ZFC^{-}$).\cite{Aczel} Below we present a proof that we have improved.

\begin{lemma}
    If $\sim$ is a regular bisimulation, $G,G'$ are $\sim$-extensional, then $G\sim G'\Rightarrow G\cong G'$.
\end{lemma}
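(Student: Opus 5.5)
Our plan is to build the isomorphism directly from $\sim$. Define a relation $R\subseteq G\times G'$ by $a\,R\,a'$ iff $G[a]\sim G'[a']$. We will show that $R$ is the graph of an isomorphism $f\colon G\to G'$ that sends $p_G$ to $p_{G'}$.

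The first step is to show that $R$ is a partial injective function. Suppose $a\,R\,a'$ and $a\,R\,a''$. Since $\sim$ is an equivalence relation (regularity (i)), we get $G'[a']\sim G'[a'']$. Note that $G'[a'][b]=G'[b]$ for descendants $b$, and the descendant subgraphs of $G'$ have the point reachable as required. So $\sim$-extensionality of $G'$ gives $a'=a''$. Symmetrically, $\sim$-extensionality of $G$ gives injectivity.

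The second step is totality and surjectivity. We have $p_G\,R\,p_{G'}$, because $G[p_G]=G$ and $G'[p_{G'}]=G'$. Next, if $a\,R\,a'$, then applying the bisimulation clause (i) to $G[a]\sim G'[a']$ gives, for every child $b$ of $a$, a child $b'$ of $a'$ with $G[a][b]\sim G'[a'][b']$, that is, $b\,R\,b'$. Here we use that $C_{G[a]}(a)=C_G(a)$ and $G[a][b]=G[b]$. Clause (ii) gives the converse direction. Every vertex of $G$ lies on a finite path from $p_G$, so induction on path length shows that $R$ is total on $G$. By the symmetric argument, $R$ is onto $G'$. Hence $f$ is a bijection with $f(p_G)=p_{G'}$.

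The third step is edge preservation. If $b\in C_G(a)$, then the argument above yields some $b'\in C_{G'}(f(a))$ with $b\,R\,b'$. Since $R$ is a function, $b'=f(b)$, so $f(b)\in C_{G'}(f(a))$. Conversely, if $f(b)\in C_{G'}(f(a))$, clause (ii) gives some $c\in C_G(a)$ with $c\,R\,f(b)$. Injectivity then forces $c=b$. So $f$ is an isomorphism of pointed graphs.

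The main care point is the bookkeeping identity $G[a][b]=G[b]$ and making sure that the bisimulation clauses apply to descendant subgraphs as pointed graphs. Beyond that, the argument is routine and uses only regularity (i) together with the bisimulation property; clauses (ii) and (iii) of regularity are not needed.
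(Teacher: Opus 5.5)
Your proof is correct and follows the paper's route. The paper likewise defines $f(a)$ as the unique $a'$ with $G[a]\sim G'[a']$: existence comes from the bisimulation clauses, and uniqueness from $\sim$-extensionality of $G'$. It then asserts that $f$ is an isomorphism; you supply what the paper leaves as ``easy to check'', namely the path-length induction for totality and surjectivity, injectivity via extensionality of $G$, and edge preservation in both directions.
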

\begin{proof}
    If $G,G'$ are $\sim$-extensional, suppose that $G\sim G'$.
    Since $\sim$ is a bisimulation, for every $a\in G$, there exists some $a'\in G'$ such that $G[a]\sim G'[a']$, and since $G'$ is $\sim$-extensional, such $a'$ is unique.
    This gives us a map $f$ from $G$ to $G'$.
    It's easy to check that $f$ is a bijection, $a\to b\Leftrightarrow f(a)\to f(b)$, and $f(p_G)=p_{G'}$, i.e., $f$ is an isomorphism from $G$ to $G'$.
\end{proof}

\begin{lemma}
    $G$ is called quasi-$\sim$-extensional if for every $a,b\in G\backslash\{p_G\}$, $G[a]\sim G[b]\Rightarrow a=b$.
    If $\sim$ is a regular bisimulation, $G$ has a point having no parents and is quasi-$\sim$-extensional, then for every $a\in G\backslash \{p_G\}$, $G\sim G[a]\Rightarrow G\approx G[a]$.
\end{lemma}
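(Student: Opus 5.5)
The plan is to unfold the definition of $\approx$ and show that $p_G$ and $a$ have \emph{exactly the same set of children} in $G$. Once we have $C_G(p_G)=C_G(a)$, the conclusion $G\approx G[a]$ follows directly. The bisimulation clauses provide, for each child on one side, a $\sim$-related child on the other side, and quasi-$\sim$-extensionality then forces these two children to coincide. The hypothesis that $p_G$ has no parents is what guarantees that every vertex involved lies in $G\backslash\{p_G\}$, where quasi-$\sim$-extensionality applies.

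First I record two preliminary observations. (1) Since $G[a]$ is the subgraph induced by the vertices reachable from $a$, it is closed under taking children. Hence $C_{G[a]}(a)=C_G(a)$, and for every $c\in G[a]$ we have $G[a][c]=G[c]$ literally, as graphs with the same vertices, edges and point. (2) Since $p_G$ has no parents, no child of any vertex equals $p_G$; so $C_G(p_G)\cup C_G(a)\subseteq G\backslash\{p_G\}$.

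Now suppose $G\sim G[a]$ with $a\neq p_G$. Take $c\in C_G(p_G)$. By clause (i) of the definition of bisimulation, there is $c'\in C_{G[a]}(a)=C_G(a)$ with $G[c]\sim G[a][c']=G[c']$. By (2), both $c$ and $c'$ lie in $G\backslash\{p_G\}$, so quasi-$\sim$-extensionality gives $c=c'$, and therefore $C_G(p_G)\subseteq C_G(a)$. Symmetrically, clause (ii) applied to any $c'\in C_G(a)$ yields $c\in C_G(p_G)$ with $G[c]\sim G[c']$, hence $c=c'$, giving $C_G(a)\subseteq C_G(p_G)$. Thus $C_G(p_G)=C_G(a)$, and using (1),
\[
\{G[c]\mid c\in C_G(p_G)\}=\{G[c']\mid c'\in C_G(a)\}=\{G[a][c']\mid c'\in C_{G[a]}(p_{G[a]})\},
\]
which is precisely $G\approx G[a]$.

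I do not expect a real obstacle. The only points needing care are the identity $G[a][c']=G[c']$, which holds because descendant subgraphs are closed downward, and the use of ``$p_G$ has no parents'' to keep all children out of $\{p_G\}$. The latter is where the hypothesis is genuinely needed: without it, some child could equal $p_G$, and quasi-$\sim$-extensionality would give no information about it.
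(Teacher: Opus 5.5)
Your proof is correct and is essentially the paper's argument: you use the two bisimulation clauses to match each child of $p_G$ with a $\sim$-related child of $a$ (and vice versa), then apply quasi-$\sim$-extensionality to conclude $C_G(p_G)=C_G(a)$; it applies because $p_G$ has no parents, so all these children lie in $G\backslash\{p_G\}$. Your explicit checks that $C_{G[a]}(a)=C_G(a)$ and $G[a][c']=G[c']$ only spell out details the paper leaves implicit.
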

\begin{proof}
    If $G$ has a point having no parents and is quasi-$\sim$-extensional, suppose that $G\sim G[a]$ ($a\in G\backslash \{p_G\}$).
    Since $\sim$ is a bisimulation, for every $b\in C_G(p_G)$, there exists some $b'\in C_G(a)$ such that $G[b]\sim G[b']$, and since $b,b'\in G\backslash \{p_G\}$, $b=b'$, which implies that $C_G(p_G)\subseteq C_G(a)$.
    Similarly, $C_G(a)\subseteq C_G(p_G)$, thus we have proved that $G\approx G[a]$.
\end{proof}

\begin{definition}
    Given a class model $\mathbf{M}$ and $x\in\mathbf{M}$ such that $\mathbf{M}\vDash$``$x$ is a graph'' (thus, $x=(V,E,p)^\mathbf{M}$), we define $ext_\mathbf{M}(x)=(\{a\mid a\in_\mathbf{M}V\},\{(a,b)\mid (a,b)^{\mathbf{M}}\in_{\mathbf{M}}E\},p)$.
    $\sim$ is called absolute for $\mathbf{M}$ if for every $x,y\in \mathbf{M}$, $(x\sim y)^\mathbf{M}\Leftrightarrow ext_\mathbf{M}(x)\sim ext_\mathbf{M}(y)$.
\end{definition}

Definition 3 is a generalization of the well-known ``absoluteness for some transitive class model'': if $\mathbf{M}$ is a transitive class model, then $ext_\mathbf{M}(x)=x$, so the definition degenerates to: for every $x,y\in \mathbf{M}$, $(x\sim y)^\mathbf{M}\Leftrightarrow x\sim y$.

\begin{theorem}\cite{Aczel}
    If $\sim$ is a regular bisimulation, let $\uni_0^\sim$ be the class of $\sim$-extensional graphs, and $\uni_c^\sim$ be the class of sets of minimal-rank representatives of $\uni_0^\sim$ modulo $\sim$ (equipped with a natural graph structure $\leftarrow$, it is an $\mathcal{L}_\in$-structure $(\uni_c^\sim,\leftarrow)$), then $\uni_c^\sim$ is a class model of $\zfc^-$; if in addition $\sim$ is absolute for $\uni_c^\sim$, then $\uni_c^\sim$ is a class model of $\zfc^-+\afa^\sim$.
\end{theorem}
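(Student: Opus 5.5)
The plan is to carry out Aczel's construction directly, with Lemmas 1 and 2 doing the combinatorial work. First I make $\uni_c^\sim$ precise. For each $\sim$-extensional graph $G$, let $[G]$ be the set of $\sim$-extensional graphs $G'\sim G$ of minimal rank (Scott's trick). This is a set, and $[G]=[G']$ iff $G\sim G'$ because $\sim$ is an equivalence relation. Define $[G']\leftarrow[G]$ iff $G'\sim G[a]$ for some $a\in C_G(p_G)$. This is well defined: if $G\sim H$ with both $\sim$-extensional, then $G\cong H$ by Lemma 1, and $\sim$ respects $\cong$ by regularity (iii). Descendant subgraphs of $\sim$-extensional graphs are again $\sim$-extensional, so $\leftarrow$ stays inside $\uni_c^\sim$. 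By Lemma 1, $\uni_c^\sim$ is in effect the class of $\sim$-extensional graphs up to isomorphism.

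Next I verify the $\zfc^-$ axioms in $(\uni_c^\sim,\leftarrow)$.

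\textbf{Extensionality.} Suppose $[G]$ and $[H]$ have the same $\leftarrow$-members. Then every $G[a]$ ($a\in C_G(p_G)$) is $\sim$ some $H[b]$ ($b\in C_H(p_H)$), and conversely. I form the graph $K$ obtained from the disjoint union of $G$ and $H$ by adding a new point whose children are the children of both points, and then take its quotient by $\sim$-equivalence of descendant subgraphs.

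\textbf{Collapse step.} This collapse is the key technical step. Given any graph, identify vertices $a,b$ with $K[a]\sim K[b]$. To show the quotient is $\sim$-extensional and that descendant subgraphs are preserved up to $\sim$, I use the bisimulation property together with (ii) and (iii): the quotient map $q$ satisfies $K[a]\sim K/\!\sim[q(a)]$. I would prove this by showing that the relation ``$X\sim K[a]$'' propagates, i.e.\ a graph and its quotient are related through a chain of $\approx$ and $\cong$ steps on descendants. This point needs care, since $\sim$ is only assumed regular, not closed under arbitrary bisimilarity. I will handle it with the point-unfolding trick of Lemma 2: first make the point parentless, and apply Lemma 2 to see that any residual identification of the point with an inner vertex is an $\approx$-identification.

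\textbf{Remaining axioms.} Once the collapse lemma is available, each axiom follows by building a graph and collapsing it.
\begin{itemize}
\item \emph{Extensionality:} $G\approx K\approx H$ gives $G\sim H$.
\item \emph{Pairing and Union:} glue the graphs under a new point.
\item \emph{Separation and Replacement:} a definable subclass of the $\leftarrow$-members of $[G]$ is a set of graphs; place these under a new point and collapse.
\item \emph{Power set:} all subsets of $C(p)$ give a set of graphs.
\item \emph{Infinity:} the von Neumann $\omega$ is well-founded and hence $\sim$-extensional.
\item \emph{Choice:} transfers, because the elements of $\uni_c^\sim$ are actual sets of graphs and a choice function can be coded by a graph of pairs.
\end{itemize}

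\textbf{Second claim.} Assume now that $\sim$ is absolute for $\uni_c^\sim$. For $\afa_1^\sim$, take a graph $x$ in the model that is $\sim$-extensional there. By absoluteness, $ext(x)$ is $\sim$-extensional in $\uni$, so it is a member of some $[G]$. The map sending each vertex $a$ to $[ext(x)[a]]$ is then internally an isomorphism onto the canonical picture of $[ext(x)]$. For $\afa_2^\sim$, the canonical picture of $[G]$ in the model has external version isomorphic to $G$, which is $\sim$-extensional, and absoluteness transfers this back into the model.

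The main obstacle I expect is the collapse lemma: proving that the $\sim$-quotient of an arbitrary graph is $\sim$-extensional and $\sim$-related to the original using only regularity and the bisimulation clauses.
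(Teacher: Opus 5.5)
\textbf{There is a genuine gap, and it sits exactly at the step you flagged.} The collapse lemma is not just unproved: for arbitrary graphs it is false.

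Take $\sim=\cong^*$, which is a regular bisimulation. Let $K$ have point $a$ and edges $a\to b$, $a\to c$, $b\to b$, $c\to c$.
\begin{itemize}
\item Since $K[b]\cong K[c]\cong Q_1$, the vertices $b$ and $c$ are identified.
\item The vertex $a$ is not identified with them: $K^*=K$ has three vertices while $Q_1^*$ has two.
\item The quotient is therefore $Q_1^*$. But $K[a]=K\not\cong^* Q_1^*$, because both have parentless points and are not isomorphic.
\item Your Lemma 2 step then moves the point to the loop vertex, giving $Q_1$, and still $K\not\cong^* Q_1$.
\end{itemize}
So no argument using only regularity and the bisimulation clauses can relate an arbitrary graph to its $\sim$-quotient. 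Merging two distinct children with isomorphic descendant subgraphs is invisible to $\equiv_\mathrm{M}$, but not to smaller regular bisimulations such as $\cong^*$. Every axiom in your plan is routed through this lemma. Your Extensionality step also uses $G\approx K\approx H$, which fails literally, since $\approx$ is equality of the sets of children's descendant subgraphs. So the proof does not go through as proposed.

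\textbf{The missing idea is that you never need to collapse an arbitrary graph.} In every construction you glue $\sim$-extensional pieces under a fresh parentless point. In that setting:
\begin{itemize}
\item Lemma 1 turns any $\sim$-identification between vertices of the pieces into an isomorphism of their descendant subgraphs. Each piece therefore embeds isomorphically, and nothing below the point is genuinely collapsed.
\item The glued graph, after identifying isomorphic positions, is quasi-$\sim$-extensional.
\item By Lemma 2, the only possible remaining failure, at the point, is an $\approx$-identification with some vertex $a$. That $a$ then already has the required members.
\end{itemize}

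The paper does this even more cheaply. For $X\subseteq\uni_c^\sim$ it glues the canonical copies $\uni_c^\sim[x]$ ($x\in X$), which live inside $\uni_c^\sim$. These already have pairwise non-isomorphic, hence by Lemma 1 non-$\sim$, descendant subgraphs, so no quotient is taken at all. This gives fullness. Two further facts complete the picture: every $\uni_c^\sim[x]$ is $\sim$-extensional, and every $\sim$-extensional graph is isomorphic to some $\uni_c^\sim[x]$. With these, Rieger's theorem yields $\zfc^-$ in one stroke, replacing your axiom-by-axiom verification. The same two facts plus absoluteness give $\afa^\sim$, essentially as in your final paragraph.
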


By lemma 1, $\uni_c^\sim$ can be equivalently defined as the class of sets of minimal-rank representatives of $\uni_0^\sim$ modulo $\cong$.

Denote the quotient map from $\uni_0^\sim$ to $\uni_c^\sim$ by $\pi$, given a $\sim$-extensional graph $G$, $\pi$ induces an isomorphism $\pi_G$ from $G$ to $\uni_c^\sim[\pi(G)]$: $\pi_G(a)=\pi(G[a])$.
Now, $\uni_c^\sim[\pi(G)]$ is a representative of the equivalence class of $G$ which is independent of $G$. Thus, $\uni_c^\sim$ can be further equivalently defined as the class of representatives of $\uni_0^\sim$ modulo $\cong$.
\begin{proof}
    It's sufficient to prove that: if $\sim$ is a regular bisimulation, then the following hold:
    \begin{enumerate}[label=(\roman*)]
        \item Every descendant subgraph of $\uni_c^\sim$ is $\sim$-extensional.
        \item Every $\sim$-extensional graph $G$ is isomorphic with some descendant subgraph of $\uni_c^\sim$.
        \item For every $X\subseteq \uni_c^\sim$, there exists some $x\in \uni_c^\sim$ such that $C_{\uni_c^\sim}(x)=X$.
    \end{enumerate}
    
    This is because: if (i) (iii) hold, by Rieger's theorem\cite{Rieger}, $\uni_c^\sim$ is a class model of $\zfc^-$; if in addition (i) (ii) hold and $\sim$ is absolute for $\uni_c^\sim$, then $\uni_c^\sim$ is a class model of $\afa^\sim$.
    
    For (i), every descendant subgraph of $\uni_c^\sim$ is of the form $\uni_c^\sim[\pi(G)]$ ($G$ is $\sim$-extensional), so $\uni_c^\sim[\pi(G)] \cong G$ is $\sim$-extensional.
        
    For (ii), obviously $\uni_c^\sim[\pi(G)] \cong G$ is what's required.
    
    For (iii), let $G_0$ be the union of all $\uni_c^\sim[x]$ ($x \in X$), and $G_1$ be the graph formed by adding to $G_0$ a new vertex $p_{G_1}$ and new edges $(p_{G_1}, x)$ for every $x \in X$ having $p_{G_1}$ as point. By lemma 1, $G_1$ is quasi-$\sim$-extensional. If $G_1$ is $\sim$-extensional, then $\pi(G_1)$ is what's required. If not, there exists some $a\in G_0$ such that $G_1\sim G_1[a]$. By lemma 2, $G_1\approx G_1[a]$, so $a$ is what's required.
\end{proof}

\begin{corollary}
    (finitary arithmetic) If $\zfc^-$+``$\sim$ is a regular bisimulation and is absolute for $\uni_c^\sim$'' is consistent, then $\zfc^-+\afa^\sim$ is consistent; in particular, if $\zfc^-\vdash$``$\sim$ is a regular bisimulation and is absolute for $\uni_c^\sim$'', then $\zfc^-+\afa^\sim$ is consistent relative to $\zfc^-$.
\end{corollary}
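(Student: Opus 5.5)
The plan is to formalize Theorem 1 as a relative interpretation carried out inside finitary arithmetic. Work in the theory $T=\zfc^-+$``$\sim$ is a regular bisimulation and is absolute for $\uni_c^\sim$''. Since $\uni_c^\sim$ is given by an explicit $\mathcal{L}_\in$-formula, and its membership relation $\leftarrow$ is also given by an explicit formula, the pair $(\uni_c^\sim,\leftarrow)$ defines a syntactic translation $\varphi\mapsto\varphi^{\uni_c^\sim}$ of $\mathcal{L}_\in$-formulas. This translation is primitive recursive and commutes with logical deduction.

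The key step is to observe that the proof of Theorem 1 is schematic. For each axiom $\psi$ of $\zfc^-+\afa^\sim$ it yields a concrete proof, in $T$, of $\psi^{\uni_c^\sim}$. Concretely:
\begin{itemize}
\item the axioms of $\zfc^-$ are handled by Rieger's theorem, which for each axiom (including each instance of the separation and replacement schemes) gives a proof from (i) and (iii);
\item the single axiom $\afa^\sim$ is handled using (i), (ii) and the absoluteness hypothesis.
\end{itemize}
The map sending $\psi$ to this proof is itself primitive recursive. This is the step needing the most care. Rieger's theorem is a metatheorem about schemes, so I will check that the relativized instances are produced uniformly. That amounts to checking that each instance is proved by a fixed template with the formula plugged in. This is standard, since Rieger's argument only uses (iii) together with replacement and separation instances in $V$ for the corresponding relativized formulas.

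Given this, suppose $\zfc^-+\afa^\sim$ derives a contradiction $\bot$. Relativizing every line of that derivation, and replacing each axiom by its proof in $T$, yields a derivation of $\bot^{\uni_c^\sim}=\bot$ in $T$. We also need that $T$ proves $\uni_c^\sim$ is nonempty, which holds because the one-vertex graph is $\sim$-extensional. This transformation is finitary, so $\con(T)\to\con(\zfc^-+\afa^\sim)$ is provable in finitary arithmetic.

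For the special case, assume $\zfc^-\vdash$``$\sim$ is a regular bisimulation and is absolute for $\uni_c^\sim$''. Then $T$ and $\zfc^-$ have the same theorems, so a contradiction in $T$ yields one in $\zfc^-$. Hence $\con(\zfc^-)\to\con(\zfc^-+\afa^\sim)$, which is the claimed relative consistency.
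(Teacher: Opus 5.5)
Your proposal is correct and takes essentially the paper's approach. The paper gives this corollary as an immediate consequence of Theorem 1: inside $\zfc^-$ plus the hypothesis, $(\uni_c^\sim,\leftarrow)$ is a class model of $\zfc^-+\afa^\sim$. You spell out the standard finitary interpretation argument that the paper leaves implicit, including the uniform relativized proofs and the nonemptiness of $\uni_c^\sim$.
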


\section{The generalization of Aczel's theorem}

In this section, we generalize Aczel's theorem proved in the previous section and give the ``necessary and sufficient condition'' for a regular graph relation $\sim$ to induce a relatively consistent anti-foundation axiom $\afa^\sim$.

\begin{definition}
    $\sim$ is called having the set property if for every set $X$ of mutually non-isomorphic $\sim$-extensional graphs, there exists some $\sim$-extensional graph $G$ such that $\{G[a]\mid a\in C_G(p_G)\}=X$ up to isomorphism.
\end{definition}

\begin{theorem}
    If $\sim$ is regular and $\afa^\sim$ holds, then $\sim$ has the set property.
\end{theorem}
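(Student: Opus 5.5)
Under $\afa^\sim$, the $\sim$-extensional graphs are exactly the exact pictures. So the natural witness for the set property is the canonical picture of a suitably chosen set, and the argument amounts to translating graphs into sets and back.

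Let $X$ be a set of mutually non-isomorphic $\sim$-extensional graphs. By $\afa_1^\sim$ each $H\in X$ is an exact picture, so there is a set $x_H$ with $H\cong G_{x_H}=(\trcl(\{x_H\}),\in,x_H)$. Using the axiom of choice (or replacement together with the fact that $x_H$ is determined by $H$), we form the set $Y=\{x_H\mid H\in X\}$. The key point is that an exact picture is an exact picture of only one set. This is the normality consequence of $\afa_2^\sim$: exact pictures are $\sim$-extensional, hence $\cong$-extensional, because $\sim$ is regular and so $\cong\subseteq\sim$. It follows that $x_H$ is uniquely determined by $H$. Since the graphs in $X$ are pairwise non-isomorphic, the map $H\mapsto x_H$ is injective.

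We then take $G=G_Y$, the canonical picture of $Y$. This is an exact picture, so $G$ is $\sim$-extensional by $\afa_2^\sim$. The children of the point $Y$ in $G_Y$ are exactly the elements $y\in Y$. For each such $y$, the descendant subgraph $G_Y[y]$ is $(\trcl(\{y\}),\in,y)=G_y$. This holds because $\trcl(\{y\})\subseteq\trcl(\{Y\})$, and the vertices reachable from $y$ along $\in$-edges are exactly $\trcl(\{y\})$. Hence
\[
\{G[a]\mid a\in C_G(p_G)\}=\{G_{x_H}\mid H\in X\},
\]
and each $G_{x_H}\cong H$. So this set equals $X$ up to isomorphism, understood as a bijective correspondence that respects isomorphism classes. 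The correspondence is bijective because both sides consist of pairwise non-isomorphic graphs and the map $H\mapsto x_H$ is injective.

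The main obstacle is bookkeeping rather than depth. Two points need care.

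\begin{itemize}
\item We must verify that $G_Y[y]$ really coincides with $G_y$ as an induced subgraph. This requires checking that the vertex set reachable from $y$ is $\trcl(\{y\})$ and that the edge relation is the restriction of $\in$.
\item We must note that collecting the $x_H$ into a set requires only replacement, since $x_H$ is unique up to identity. The uniqueness relies on $\cong$-extensionality of exact pictures, which follows from regularity condition (iii) together with $\afa_2^\sim$.
\end{itemize}

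Even without uniqueness, choice would suffice to select the $x_H$. In that case injectivity of $H\mapsto x_H$ still follows from non-isomorphism, since $x_H=x_{H'}$ would give $H\cong G_{x_H}\cong H'$.
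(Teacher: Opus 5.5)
Your proof is correct and follows the paper's argument: by $\afa_1^\sim$ each $H\in X$ pictures a set $x_H$, which is unique by $\afa_2^\sim$ together with regularity. You then collect these sets into $Y$ and take $G_Y$, which is $\sim$-extensional by $\afa_2^\sim$, and you make explicit the bookkeeping the paper leaves implicit ($G_Y[y]=G_y$ and injectivity of $H\mapsto x_H$). Your closing aside that choice would suffice without uniqueness is unnecessary and shaky, since the class of sets pictured by $H$ need not be a set and choice applies only to sets; the main argument uses uniqueness, so nothing depends on that aside.
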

\begin{proof}
    Under $\afa^\sim$, suppose that $X$ is a set of mutually non-isomorphic $\sim$-extensional graphs. For every $G \in X$, there exists one (and only one) $y$ such that $G$ is an exact picture of $y$. Consider the set $Y$ consisting of these $y$, $G_Y$ is what's required.
\end{proof}

\begin{theorem}
    If $\sim$ is regular, then the following are mutually equivalent:
    \begin{enumerate}[label=(\Roman*)]
        \item $\sim$ has the set property.
        \item If $G$ has a point having no parents and is quasi-$\cong$-extensional, and for every $a\in G\backslash\{p_G\}$, $G[a]$ is $\sim$-extensional, then either $G$ is $\sim$-extensional, or there exists some $a\in G\backslash\{p_G\}$ such that $G\approx G[a]$.
        \item the following hold:
            \begin{enumerate}[label=(\roman*)]
                \item If $G\not\cong G'$ are both $\sim$-extensional, then $G\not\sim G'$.
                \item If $G$ has a point having no parents and is quasi-$\sim$-extensional, then for every $a\in G\backslash\{p_G\}$, $G\sim G[a]\Rightarrow G\approx G[a]$.
            \end{enumerate}
    \end{enumerate}
\end{theorem}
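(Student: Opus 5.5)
I will prove the cycle (I)$\Rightarrow$(III)$\Rightarrow$(II)$\Rightarrow$(I).

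\textbf{(I)$\Rightarrow$(III).} For (III)(i), take $\sim$-extensional $G\not\cong G'$ and suppose $G\sim G'$. Apply the set property to $X=\{G,G'\}$. This gives a $\sim$-extensional $H$ with children $a\neq a'$ of $p_H$ such that $H[a]\cong G$ and $H[a']\cong G'$. By regularity (iii) and transitivity, $H[a]\sim H[a']$, so $\sim$-extensionality of $H$ forces $a=a'$. Then $G\cong G'$, a contradiction.

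For (III)(ii), let $G$ have a parentless point and be quasi-$\sim$-extensional, and suppose $G\sim G[a]$ with $a\ne p_G$. Each $G[b]$ with $b\neq p_G$ is $\sim$-extensional: its vertices are all $\neq p_G$, since $p_G$ has no parents, and being quasi-$\sim$-extensional is inherited. Let $X$ consist of the graphs $G[b]$ for $b\in C_G(p_G)$; these are pairwise non-isomorphic by quasi-extensionality. The set property yields a $\sim$-extensional $H$ whose top-level subgraphs are $X$ up to isomorphism, so $H\approx G$ up to isomorphism. Regularity then gives $H\sim G\sim G[a]$.
\begin{itemize}
\item[] The aim is to realize $G[a]$ inside $H$. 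The natural idea is to glue: build the graph $K$ obtained from $G$ by identifying the $H$-copies. More simply, I expect one can show $H\cong G$ whenever $G$ is itself $\sim$-extensional. If $G$ is $\sim$-extensional, the conclusion is trivial because $G\sim G[a]$ forces $a=p_G$.
\item[] If $G$ is not $\sim$-extensional, the only failures involve $p_G$, i.e.\ $G\sim G[c]$ for some $c$. I would then compare $H$ with $G[a]$, using (i) (already proved) on the $\sim$-extensional graphs $H$ and $G[a]$: from $H\sim G[a]$ we get $H\cong G[a]$. Hence $\{G[b]:b\in C_G(p_G)\}$ equals the set of top-level subgraphs of $G[a]$ up to isomorphism. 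Quasi-$\cong$-extensionality inside $G$ (which follows from quasi-$\sim$-extensionality by regularity) turns this equality up to isomorphism into literal equality $C_G(p_G)=C_G(a)$, i.e.\ $G\approx G[a]$.
\end{itemize}
This bookkeeping, converting ``up to isomorphism'' into literal identity of children, is the main obstacle.

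\textbf{(III)$\Rightarrow$(II).} Let $G$ satisfy the hypotheses of (II). If two vertices $b\neq c$ other than $p_G$ had $G[b]\sim G[c]$, then by (III)(i), since both are $\sim$-extensional, $G[b]\cong G[c]$. This contradicts quasi-$\cong$-extensionality, so $G$ is quasi-$\sim$-extensional. If $G$ is not $\sim$-extensional, the failing pair must involve $p_G$, so $G\sim G[a]$ for some $a\neq p_G$, and (III)(ii) gives $G\approx G[a]$.

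\textbf{(II)$\Rightarrow$(I).} This mirrors part (iii) of the proof of Aczel's theorem, with (II) replacing Lemmas 1 and 2. Given $X$, replace each member by an isomorphic copy; these can be chosen disjoint and glued along isomorphic descendant subgraphs so that the union is quasi-$\cong$-extensional. One way is to take the canonical representatives $\uni_c^\sim$-style, i.e.\ minimal-rank representatives modulo $\cong$ with vertices identified. Add a fresh parentless point $p$ above the copies of the members of $X$, obtaining $G_1$. Every $G_1[a]$ with $a\ne p$ is isomorphic to a descendant subgraph of a $\sim$-extensional graph, hence is $\sim$-extensional. Applying (II): either $G_1$ is $\sim$-extensional and we are done, or $G_1\approx G_1[a]$ for some $a$, and then $G_1[a]$ is a $\sim$-extensional graph with the required children. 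The gluing construction is routine but needs care to ensure that quasi-$\cong$-extensionality holds.
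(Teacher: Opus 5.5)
Your proof is correct and follows the paper's cycle (I)$\Rightarrow$(III)$\Rightarrow$(II)$\Rightarrow$(I). Three of the steps are the paper's arguments: (III)(i) via the set property applied to $\{G,G'\}$; (III)$\Rightarrow$(II); and (II)$\Rightarrow$(I), where you glue the members of $X$ along isomorphic descendant subgraphs, add a parentless point, and apply (II).

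The genuine difference is in (I)$\Rightarrow$(III)(ii). The paper splits on whether the point of the set-property witness $G'$ has a parent. If it has none, then $G'\cong G$, so $G$ is $\sim$-extensional and no such $a$ exists. If it has one, then $G'\cong G[b]$ for some $b\neq p_G$, hence $G[b]\approx G$, and $G[a]\sim G\sim G[b]$ together with quasi-$\sim$-extensionality forces $a=b$.

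You instead reuse the just-proved (III)(i). From $H\sim G\sim G[a]$, with $H$ and $G[a]$ both $\sim$-extensional, you get $H\cong G[a]$. Quasi-$\cong$-extensionality then turns the matching of children-families into $C_G(p_G)=C_G(a)$. This is uniform and avoids locating the witness inside $G$.

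However, the step ``regularity then gives $H\sim G$'' needs an argument. Take the isomorphisms $G[b]\cong H[\beta(b)]$ for $b\in C_G(p_G)$.
\begin{enumerate}
\item They agree on overlaps because $H$ is $\cong$-extensional.
\item They glue injectively because $G$ is quasi-$\cong$-extensional.
\item The glued map gives $G\cong H$ if $p_H$ has no parent, and $G\cong H^*$ otherwise.
\end{enumerate}
Hence $G\cong^* H$ and so $G\sim H$. This is the same gluing the paper uses silently in its first case, so your route costs nothing extra once it is written out.

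Your preliminary case split on whether $G$ is $\sim$-extensional, and the exploratory remarks about gluing, can be dropped. The second-case argument works unconditionally.
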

\begin{proof}
    First, we prove that (I) implies (III)(i).

    Since $\sim$ has the set property, there exists some $\sim$-extensional graph $G''$ such that $\{G''[a]\mid a\in C_{G''}(p_{G''})\}=\{G,G'\}$ up to isomorphism, so $G\not\sim G'$.

    Second, we prove that (I) implies (III)(ii).

    Let $\{G[b]\mid b\in C_G(p_G)\}=X$, $X$ is a set of mutually non-isomorphic $\sim$-extensional graphs. Since $\sim$ has the set property, there exists some $\sim$-extensional graph $G'$ such that $G'\approx G$ up to isomorphism.
    
    If $G'$ has a point having no parents, then $G'\cong G$. Thus, $G$ is $\sim$-extensional, and there doesn't exist any $a\in G\backslash\{p_G\}$ such that $G\sim G[a]$.
    
    If $G'$ has a point having a parent, then $G'\cong$ some descendant subgraph of some $G[b]$ ($b\in C_G(p_G)$), i.e., $G'\cong$ some $G[b]$ ($b\in G\backslash\{p_G\}$), so $G[b]\approx G$. Now, if $G\sim G[a]$ ($a\in G\backslash\{p_G\}$), then $G[a]\sim G[b]$, so $a=b$, and then $G[a]=G[b]\approx G$.

    Third, we prove that (III) implies (II).

    If such a $G$ is not $\sim$-extensional, by (i), it is quasi-$\sim$-extensional, so $G\sim G[a]$ for some $a\in G\backslash\{p_G\}$, and by (ii), $G\approx G[a]$.

    Last, we prove that (II) implies (I).

    For every set $X$ of mutually non-isomorphic $\sim$-extensional graphs, we may assume without loss of generality that any two graphs among their descendant subgraphs that are isomorphic are identical.
    Let $G_0$ be the union of all $G \in X$, and $G_1$ be the graph formed by adding to $G_0$ a new vertex $p_{G_1}$ and new edges $(p_{G_1}, p_G)$ for every $G \in X$ having $p_{G_1}$ as point.
    By (II), $G_1\approx $ some $\sim$-extensional graph, which is what's required.
\end{proof}

\begin{corollary}
    If $\sim$ is a regular bisimulation, then $\sim$ has the set property.
\end{corollary}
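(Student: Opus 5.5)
The plan is to verify condition (III) of Theorem 3 and then apply the equivalence (III)$\Rightarrow$(I). Since a bisimulation is in particular regular here, Theorem 3 applies directly, and both halves of (III) are essentially what Lemmas 1 and 2 establish.

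For (III)(i), I will argue by contraposition. Suppose $G,G'$ are both $\sim$-extensional and $G\sim G'$. Since $\sim$ is a regular bisimulation, Lemma 1 gives $G\cong G'$. So if $G\not\cong G'$, then $G\not\sim G'$, which is exactly (III)(i).

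For (III)(ii), the hypotheses are that $G$ has a point with no parents and that $G$ is quasi-$\sim$-extensional. This is verbatim the hypothesis of Lemma 2, and its conclusion, $G\sim G[a]\Rightarrow G\approx G[a]$ for every $a\in G\backslash\{p_G\}$, is verbatim (III)(ii).

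With both parts of (III) in hand, Theorem 3 yields (I), so $\sim$ has the set property. I do not expect a real obstacle. The only thing to check is that Lemmas 1 and 2 use nothing beyond regularity and the bisimulation clauses, so that they apply to an arbitrary regular bisimulation rather than only to the specific relations $\cong^*$, $\cong^t$ and $\equiv_\mathrm{M}$. Their proofs use only transitivity and symmetry of $\sim$ together with the two bisimulation clauses, so this holds.

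As a cross-check, one could instead prove the set property directly, by repeating the argument for (iii) in the proof of Theorem 2. Adjoin a new root above the disjoint union of the graphs in $X$, identified up to isomorphism. Lemmas 1 and 2 then show that the result is either $\sim$-extensional, or $\approx$ to one of its own descendant subgraphs, which is $\sim$-extensional.
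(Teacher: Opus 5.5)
Your proposal is correct and matches the paper's proof exactly: (III)(i) is the contrapositive of Lemma 1, (III)(ii) is verbatim Lemma 2, and Theorem 3 then gives the set property. One minor slip in your cross-check: the root-adjoining argument for (iii) appears in the proof of Theorem 1 (Aczel's theorem), not Theorem 2.
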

\begin{proof}
    It suffices to prove that $\sim$ satisfies (III) in Theorem 3, and this is precisely Lemma 1 and Lemma 2.
\end{proof}

\begin{theorem}
    If $\sim$ is regular and has the set property, let $\uni_0^\sim$ be the class of $\sim$-extensional graphs, and $\uni_c^\sim$ be the class of sets of minimal-rank representatives of $\uni_0^\sim$ modulo $\sim$ (equipped with a natural graph structure $\leftarrow$, it is an $\mathcal{L}_\in$-structure $(\uni_c^\sim,\leftarrow)$), then $\uni_c^\sim$ is a class model of $\zfc^-$; if in addition $\sim$ is absolute for $\uni_c^\sim$, then $\uni_c^\sim$ is a class model of $\zfc^-+\afa^\sim$.
\end{theorem}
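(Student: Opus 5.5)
We follow the proof of Theorem 1 (Aczel's theorem) verbatim, checking that the only places where bisimulation was used are exactly what the set property supplies via Theorem 3. As there, it suffices to establish:
\begin{enumerate}[label=(\roman*)]
    \item every descendant subgraph of $\uni_c^\sim$ is $\sim$-extensional;
    \item every $\sim$-extensional graph $G$ is isomorphic with some descendant subgraph of $\uni_c^\sim$;
    \item for every $X\subseteq\uni_c^\sim$ there is $x\in\uni_c^\sim$ with $C_{\uni_c^\sim}(x)=X$.
\end{enumerate}
Rieger's theorem then yields $\zfc^-$ from (i) and (iii). If in addition $\sim$ is absolute for $\uni_c^\sim$, then (i) and (ii) say that, inside the model, the exact pictures are exactly the $\sim$-extensional graphs, which gives $\afa^\sim$.

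The first step is to make the construction of $\uni_c^\sim$ and the isomorphisms $\pi_G$ legitimate without Lemma 1. In Theorem 1, the remark that $\sim$ and $\cong$ agree on $\uni_0^\sim$ came from Lemma 1. Here it comes from Theorem 3 (I)$\Rightarrow$(III)(i): for $\sim$-extensional $G,G'$ we have $G\sim G'\iff G\cong G'$, the direction $\Leftarrow$ being regularity. Next we need that every descendant subgraph of a $\sim$-extensional graph is again $\sim$-extensional. This holds because for $b,c\in G[a]$ we have $G[a][b]=G[b]$, so the condition is inherited. Consequently $\pi_G(a)=\pi(G[a])$ is well defined. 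It is injective by $\sim$-extensionality of $G$, and it respects edges by the definition of $\leftarrow$. So it is an isomorphism $G\cong\uni_c^\sim[\pi(G)]$, exactly as before, and (i) and (ii) follow word for word as in Theorem 1.

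The main step is (iii), since this is where Theorem 1 used Lemmas 1 and 2. Form $G_0=\bigcup_{x\in X}\uni_c^\sim[x]$ and adjoin a fresh point $p_{G_1}$ with edges to each $x\in X$, giving $G_1$. Its point has no parents. Every $a\in G_1\setminus\{p_{G_1}\}$ gives $G_1[a]=\uni_c^\sim[a]$, which is $\sim$-extensional by (i). Moreover $G_1$ is quasi-$\cong$-extensional: distinct vertices of $\uni_c^\sim$ are distinct representatives modulo $\cong$, so $\uni_c^\sim[a]\cong\uni_c^\sim[b]$ forces $a=b$. This last fact is the point to verify carefully, using that $\pi_G$ is an isomorphism onto $\uni_c^\sim[\pi(G)]$ and that representatives are canonical. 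Hence Theorem 3 (II) applies. Either $G_1$ is $\sim$-extensional, and then $\pi(G_1)$ has children $\{\pi(\uni_c^\sim[x])\mid x\in X\}=X$; or $G_1\approx G_1[a]$ for some $a\in G_0$, and then $C_{\uni_c^\sim}(a)=C_{G_1}(p_{G_1})=X$. In both cases (iii) holds.

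I expect the only delicate point to be the bookkeeping that $\pi(\uni_c^\sim[x])=x$ for $x\in\uni_c^\sim$, i.e.\ that the canonical-representative construction is idempotent. This follows from the remark after Theorem 1: $\uni_c^\sim[\pi(G)]$ depends only on the $\cong$-class of $G$. That remark now rests on Theorem 3 (III)(i) instead of Lemma 1.
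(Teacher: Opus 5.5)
Your proposal is correct and follows essentially the paper's route. The paper's proof is just the observation that Theorem 1's argument used only Lemmas 1 and 2, i.e.\ condition (III) of Theorem 3, which is equivalent to the set property; your one deviation is to invoke (II) directly in step (iii), checking quasi-$\cong$-extensionality of $G_1$ rather than quasi-$\sim$-extensionality as (III)(ii) requires, and this is just the (III)$\Rightarrow$(II) step of Theorem 3 unpacked.
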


As in the case of Theorem 1, $\uni_c^\sim$ can be equivalently defined as the class of representatives of $\uni_0^\sim$ modulo $\cong$.

\begin{proof}
    Note that the proof of Theorem 1 uses only the fact that a regular bisimulation satisfies (III) in Theorem 3, which is equivalent with the set property. Thus ends the proof.
\end{proof}

\begin{corollary}
    (finitary arithmetic) If $\zfc^-\vdash$``$\sim$ is regular'', then:
    \begin{enumerate}[label=(\arabic*)]
        \item If $\zfc^-\vdash$``$\sim$ doesn't have the set property'', then $\zfc^-+\afa^\sim$ is inconsistent.
        \item If $\zfc^-+$``$\sim$ has the set property and is absolute for $\uni_c^\sim$'' is consistent, then $\zfc^-+\afa^\sim$ is consistent; in particular, if $\zfc^-\vdash$``$\sim$ has the set property and is absolute for $\uni_c^\sim$'', then $\zfc^-+\afa^\sim$ is consistent relative to $\zfc^-$.
    \end{enumerate}
\end{corollary}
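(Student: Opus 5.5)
The corollary has two parts, and each follows from one earlier theorem by a soundness (for (1)) or interpretation (for (2)) argument carried out in finitary arithmetic.

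For (1), I would argue by contradiction and simply formalize Theorem 2. The proof of Theorem 2 is an argument in $\zfc^-$ showing that ``$\sim$ is regular'' together with $\afa^\sim$ implies ``$\sim$ has the set property''. Since $\zfc^-\vdash$``$\sim$ is regular'', we get $\zfc^-+\afa^\sim\vdash$``$\sim$ has the set property''. By hypothesis, $\zfc^-\vdash$``$\sim$ doesn't have the set property''. Hence $\zfc^-+\afa^\sim$ proves a sentence and its negation, so it is inconsistent. All of this is a syntactic manipulation of proofs and is clearly carried out in finitary arithmetic.

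For (2), I would follow the pattern of Corollary 1 but appeal to Theorem 4 in place of Theorem 1. Let $T=\zfc^-+$``$\sim$ has the set property and is absolute for $\uni_c^\sim$''. The class $\uni_c^\sim$ and its relation $\leftarrow$ are given by $\mathcal{L}_\in$-formulas, so the relativization $\varphi\mapsto\varphi^{\uni_c^\sim}$ is a syntactic translation. Read as a proof in $T$ (regularity of $\sim$ being available because $\zfc^-$ proves it), Theorem 4 is really a schema: for each axiom $\varphi$ of $\zfc^-+\afa^\sim$, $T\vdash\varphi^{\uni_c^\sim}$. The ingredients are Rieger's theorem for the $\zfc^-$ axioms, and (i), (ii) together with absoluteness for $\afa^\sim$. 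Relativization commutes with logical deduction, so a derivation of $0=1$ in $\zfc^-+\afa^\sim$ would translate into a derivation of $(0=1)^{\uni_c^\sim}$, i.e.\ a contradiction, in $T$. Hence $\con(T)\Rightarrow\con(\zfc^-+\afa^\sim)$. The ``in particular'' clause follows because if $\zfc^-\vdash$ the extra sentences, then $T$ is just $\zfc^-$ up to provable equivalence, and $\con(\zfc^-)\Rightarrow\con(T)$.

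The only delicate point is the schematic nature of Theorem 4. ``$\uni_c^\sim$ is a class model of $\zfc^-$'' cannot be a single theorem, since it covers infinitely many axioms (Replacement and Separation instances). I would handle this as in the standard proof of Rieger's theorem: for each instance, produce its relativization uniformly by a primitive recursive procedure, so that the translation of proofs is definable in finitary arithmetic. Two further checks are needed. Absoluteness is used only for $\sim$ itself, and $\afa^\sim$ relativized mentions graphs inside $\uni_c^\sim$ via $ext_{\uni_c^\sim}$; conditions (i) and (ii) in the proof of Theorem 1 show that $\uni_c^\sim$ satisfies $\afa^\sim$.
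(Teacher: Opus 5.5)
Your proposal is correct and matches the argument the paper intends; the paper gives no separate proof, treating the corollary as immediate. Part (1) is Theorem 2 combined with the hypothesis that $\zfc^-$ refutes the set property, and part (2) is the standard relative-interpretation argument applied to Theorem 4 (interpreting the theory in $\uni_c^\sim$), exactly as Corollary 1 follows from Theorem 1.
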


\section{A new anti-foundation axiom family}

In this section, we construct a new anti-foundation axiom family based on Corollary 3 and use it to give a negative answer to Question 1.

\begin{theorem}
    (finitary arithmetic) There exists a graph relation $\sim$ such that $\zfc^-\vdash$``$\sim$ is regular'', $\zfc^-+\afa^\sim$ is consistent relative to $\zfc^-$, and for every $\sim'$ such that $\zfc^-\vdash$``$\sim'$ is a regular bisimulation'', (if $\zfc^-$ is consistent, then) $\zfc^-\not\vdash\afa^\sim\Leftrightarrow\afa^{\sim'}$.
\end{theorem}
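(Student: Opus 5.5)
The plan is to construct a regular graph relation $\sim$ that is not a bisimulation but still has the set property and is absolute for $\uni_c^\sim$. Then Corollary 3(2) gives the relative consistency of $\afa^\sim$. Separately, we show that $\afa^\sim$ refutes $\afa^{\sim'}$ for every regular bisimulation $\sim'$ in a way that $\zfc^-$ can verify.

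For the construction, start from the minimal regular relation $\cong^*$ and identify some additional pairs. A natural candidate is to let $G\sim G'$ hold if $G\cong^*G'$, or if both $G$ and $G'$ are isomorphic to the canonical pictures of two fixed, concretely definable non-well-founded objects. For instance, take the "Quine atom" pattern $\Omega$ (a single vertex with a loop) together with a second graph $H$ (say, a vertex with a loop and an edge to a new vertex which also has a loop). Their behaviour at the first level is non-bisimilar under $\cong^*$-extensionality in a controlled way. We then take the smallest equivalence relation containing $\approx$, $\cong$ and this extra pair, closed appropriately. The extra identification should be chosen so that bisimulation clause (i) fails: some child of one side has no $\sim$-partner among the children of the other side. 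At the same time, the identification should only cause collapses of $\sim$-extensional graphs that are harmless.

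Key steps, in order. (1) Verify in $\zfc^-$ that $\sim$ is regular; this is immediate from the definition as an equivalence closure containing $\approx$ and $\cong$. (2) Verify condition (III) of Theorem 3, which is equivalent to the set property. Part (III)(i) requires that no two non-isomorphic $\sim$-extensional graphs are $\sim$-related. Since the extra pair makes $\Omega$ and $H$ unable to both occur as descendant subgraphs of a $\sim$-extensional graph, we need to check that every non-$\cong^*$ identification involves a graph that is not $\sim$-extensional. Part (III)(ii) reduces to the $\cong^*$ case via Lemma 2, plus a direct inspection of the finitely many extra pairs. (3) Check absoluteness of $\sim$ for $\uni_c^\sim$. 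Because $\sim$ is defined from isomorphism and $\approx$ together with fixed finite patterns, this should follow as in the $\FAFA$ case, using that $\uni_c^\sim$ is built from representatives modulo $\cong$. (4) Apply Corollary 3(2).

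For the final clause, suppose $\sim'$ is a provably regular bisimulation. We will exhibit a specific finite graph $K$ that is $\sim$-extensional but not $\sim'$-extensional, or the reverse, with the witness independent of $\sim'$. By Fact 1(2)(3), every regular bisimulation contains $\cong^*$. The bisimulation property then forces some relation that our $\sim$ forbids, or $\sim$ contains a pair that no bisimulation can contain without identifying further pairs. Concretely, if $\afa^\sim$ held together with $\afa^{\sim'}$, the exact pictures would be exactly the $\sim$-extensional graphs. Applying Lemma 1 for $\sim'$ to these graphs, and using that $\sim$ relates two non-bisimilar children structures, yields a contradiction inside $\zfc^-+\afa^\sim$. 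Hence $\zfc^-+\afa^\sim\vdash\neg\afa^{\sim'}$, and consistency of $\zfc^-+\afa^\sim$ gives $\zfc^-\not\vdash\afa^\sim\Leftrightarrow\afa^{\sim'}$.

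The main obstacle is step (2) together with this last step. The extra identification must be strong enough that the class of $\sim$-extensional graphs differs from that of every regular bisimulation. It must also be weak enough that (III)(i) survives: adding identifications tends to create non-isomorphic $\sim$-extensional graphs that are $\sim$-related. I would first try the simplest option, identifying exactly one pair of finite graphs whose points have parents, and then check (III) by case analysis on whether $\Omega$ or $H$ appears as a descendant subgraph.
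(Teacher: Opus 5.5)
Your overall architecture is the paper's. You add to $\cong^*$ one identification between finite looped graphs, verify (III) of Theorem 3 and absoluteness, invoke Corollary 3(2), and then show $\zfc^-+\afa^\sim\vdash\neg\afa^{\sim'}$ with a fixed finite witness. But there is a genuine gap in the step that carries the whole theorem: the choice of identification and the separating witness.

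\emph{Your concrete candidate fails.} Your $\Omega$ is $Q_1$ and your $H$ is $Q_2$. So your relation is exactly $\cong^*_{Q_2}$, with identified class $\{Q_1,Q_1^*,Q_2,Q_2^*\}$. That relation is a regular bisimulation. The children of the point of $Q_1$ generate only $Q_1$, and those of $Q_2$ generate $Q_2$ and $Q_1$. All of these lie in the identified class, so both bisimulation clauses hold, contrary to your expectation that clause (i) would fail. Taking $\sim'=\sim$ then defeats the last clause of the statement.

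\emph{Failing to be a bisimulation is not enough anyway.} For example, identifying two non-extensional graphs leaves the class of $\sim$-extensional graphs unchanged, so $\afa^\sim\Leftrightarrow\afa^{\cong^*}$ is provable. Your sketch of the final clause (``applying Lemma 1 for $\sim'$ \dots\ non-bisimilar children structures yields a contradiction'') never names a graph whose extensionality status must differ, so it is not yet an argument.

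The missing idea is to identify $Q_1$ with $Q_n$ for some $n\ge 3$, deliberately skipping $Q_2$.
\begin{itemize}
\item $Q_n$ is not $\sim$-extensional, since its descendant subgraphs $Q_n$ and $Q_1$ are identified.
\item $Q_2$ is still $\sim$-extensional, since $Q_2\not\cong^* Q_1$ and $Q_2$ lies outside $\{Q_1,Q_1^*,Q_n,Q_n^*\}$.
\item Suppose $\afa^\sim$ and $\afa^{\sim'}$ both held. Then both extensionality notions coincide with being an exact picture, so $Q_i\sim' Q_j$ for some $1\le i<j\le n$.
\item The children of the point of $Q_k$ generate $Q_k$ and $Q_{k-1}$. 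Clause (i) of the bisimulation definition, applied repeatedly, lowers $i$ to $1$ while keeping $i<j$. Clause (ii) then lowers $j$ to $2$.
\item This yields $Q_1\sim' Q_2$, so $Q_2$ is not $\sim'$-extensional, which is a contradiction.
\end{itemize}
The set-property check is then short. Among $Q_1,Q_1^*,Q_n,Q_n^*$ only $Q_1$ is $\sim$-extensional, which gives (III)(i). For (III)(ii), a quasi-$\sim$-extensional graph with parentless point in that class must be $Q_1^*$. Its only proper descendant subgraph is $Q_1\approx Q_1^*$, which gives (III)(ii).
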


\begin{proof}
    Denote by $Q_n$ the graph consisting of $n$ ($\geq 1$)-many vertices with self-loops $a_1\to\dots\to a_n$ having $a_1$ as point.
    
\begin{figure}[htbp]
\[\begin{tikzcd}
	{a_1} \\
	\\
	{a_2} \\
	\\
	{a_3}
	\arrow[from=1-1, to=1-1, loop, in=325, out=35, distance=10mm]
	\arrow[from=1-1, to=3-1]
	\arrow[from=3-1, to=3-1, loop, in=325, out=35, distance=10mm]
	\arrow[from=3-1, to=5-1]
	\arrow[from=5-1, to=5-1, loop, in=325, out=35, distance=10mm]
\end{tikzcd}\]
\caption{$Q_3$}
\end{figure}

    For $n\geq2$, define $\cong_{Q_n}^*$ as follows: $G\cong_{Q_n}^*G'$ iff $G\cong^*G'$ or $G,G'\in\{Q_1,Q_1^*,Q_n,Q_n^*\}$ up to isomorphism.
    
    Obviously, $\cong_{Q_2}^*$ is a regular bisimulation. For $n\geq3$, $\cong_{Q_n}^*$ is regular, and below we prove that $\cong_{Q_n}^*$ satisfies the other requirements.

    First, we prove that $\zfc^-+\afa^{\cong_{Q_n}^*}$ is consistent relative to $\zfc^-$.
    
    By Corollary 3, it's sufficient to prove under $\zfc^-$ that $\cong_{Q_n}^*$ has the set property and is absolute for $\uni_c^{\cong_{Q_n}^*}$.
    The latter is obvious. Below we prove the former, and it's sufficient to prove that $\cong_{Q_n}^*$ satisfies (III) in Theorem 3.
    
    (i): If $G\not\cong G'$ are both $\cong_{Q_n}^*$-extensional, then they are both $\cong^*$-extensional, so $G\not\cong^*G'$.
    Assume the contrary that $G\cong_{Q_n}^* G'$, comparing $\cong^*$ and $\cong_{Q_n}^*$, we have that $G,G'\in\{Q_1,Q_1^*,Q_n,Q_n^*\}$ up to isomorphism. Since $G,G'$ are both $\cong_{Q_n}^*$-extensional, and among $Q_1,Q_1^*,Q_n,Q_n^*$, only $Q_1$ is $\cong_{Q_n}^*$-extensional, we have that $G,G'\cong Q_1$, contrary to $G\not\cong G'$!
    Thus, $G\not\cong_{Q_n}^* G'$.
    
    (ii): If $G$ has a point having no parents and is quasi-$\cong_{Q_n}^*$-extensional, then it is quasi-$\cong^*$-extensional, so for every $a\in G\backslash\{p_G\}$, $G\cong^* G[a]\Rightarrow G\approx G[a]$.
    Assume the contrary that there exists some $a\in G\backslash\{p_G\}$ such that $G\cong_{Q_n}^* G[a]$ and $G\not\approx G[a]$, comparing $\cong^*$ and $\cong_{Q_n}^*$, we have that $G,G[a]\in\{Q_1,Q_1^*,Q_n,Q_n^*\}$ up to isomorphism. Since $G$ has a point having no parents and is quasi-$\cong_{Q_n}^*$-extensional, we have that $G\cong Q_1^*$, so $G[a]\cong Q_1$, contrary to $G\not\approx G[a]$!
    Thus, for every $a\in G\backslash\{p_G\}$, $G\cong_{Q_n}^* G[a]\Rightarrow G\approx G[a]$.
        
    Second, we prove that for every $\sim$ such that $\zfc^-\vdash$``$\sim$ is a regular bisimulation'', (if $\zfc^-$ is consistent, then) $\zfc^-\not\vdash\afa^{\cong_{Q_n}^*}\Leftrightarrow\afa^{\sim}$.
    
    Working in $\zfc^-$, we prove that $\afa^{\cong_{Q_n}^*}\Rightarrow\neg\afa^\sim$. Note that $Q_2$ is $\cong_{Q_n}^*$-extensional and $Q_n$ is not $\cong_{Q_n}^*$-extensional.
    Assume the contrary that $\afa^{\cong_{Q_n}^*}\land\afa^\sim$, then $Q_2$ is $\sim$-extensional and $Q_n$ is not $\sim$-extensional, i.e., there exist some $1\leq i<j\leq n$ such that $Q_i\sim Q_j$. By repeatedly applying the condition that $\sim$ is a regular bisimulation, we have that $Q_1\sim Q_2$, so $Q_2$ is not $\sim$-extensional, a contradiction! Thus, $\afa^{\cong_{Q_n}^*}\Rightarrow\neg\afa^\sim$.
    
    Back to finitary arithmetic, we have that $\zfc^-+\afa^{\cong_{Q_n}^*}\vdash\neg\afa^\sim$, so $\con(\zfc^-)$ implies $\con(\zfc^-+\afa^{\cong_{Q_n}^*})$, which in turn implies $\con(\zfc^-+\afa^{\cong_{Q_n}^*}\not\Leftrightarrow\afa^{\sim})$.
\end{proof}

\begin{corollary}
    The converse of Corollary 2 does not hold.
\end{corollary}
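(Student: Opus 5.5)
Corollary 2 says: if $\sim$ is a regular bisimulation, then $\sim$ has the set property. Its converse would read: if $\sim$ is regular and has the set property, then $\sim$ is a bisimulation. To refute it I will exhibit a single regular relation with the set property that is not a bisimulation, provably in $\zfc^-$. I take $\cong_{Q_n}^*$ for a fixed $n\geq3$, say $n=3$, from the proof of Theorem 5. That proof already shows, working in $\zfc^-$, that $\cong_{Q_3}^*$ is regular and satisfies (III) of Theorem 3. By Theorem 3 this means $\cong_{Q_3}^*$ has the set property, so only the failure of the bisimulation clauses remains.

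The plan is to show directly that $Q_1\cong_{Q_3}^*Q_3$ while the bisimulation condition fails for this pair. The children of the point of $Q_1$ are just $a_1$ (its self-loop), so $\{Q_1[a]\mid a\in C_{Q_1}(p_{Q_1})\}=\{Q_1\}$. The children of the point of $Q_3$ are $a_1$ and $a_2$, with $Q_3[a_1]=Q_3$ and $Q_3[a_2]\cong Q_2$. Clause (ii) of the bisimulation definition would require $Q_2\cong_{Q_3}^*Q_1$, since $Q_1$ is the only available descendant subgraph on the other side.

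It therefore suffices to check that $Q_2\not\cong_{Q_3}^*Q_1$. Since $Q_2\notin\{Q_1,Q_1^*,Q_3,Q_3^*\}$ up to isomorphism, this reduces to showing $Q_2\not\cong^*Q_1$. This is the only real verification, and I would do it by computing the starred graphs. $Q_1^*$ has two vertices, a new point with a single edge into a self-looped vertex. $Q_2^*$ has a new point with edges to $a_1$ and $a_2$, where both are self-looped and $a_1\to a_2$; it has three vertices. Different vertex counts give $Q_1^*\not\cong Q_2^*$, hence $Q_2\not\cong^*Q_1$.

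Therefore $\cong_{Q_3}^*$ is a regular graph relation with the set property that is not a bisimulation, so the converse of Corollary 2 fails. If a metatheoretic reading is intended (``$\zfc^-\vdash$ set property'' does not imply ``$\zfc^-\vdash$ bisimulation''), the same argument applies, since it is carried out inside $\zfc^-$. I expect no serious obstacle; the only care needed is in the bookkeeping for $\cong^*$ on $Q_1$ and $Q_2$.
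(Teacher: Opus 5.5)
Your proposal is correct and follows essentially the paper's route. Corollary 4 is stated as a consequence of Theorem 5, whose proof supplies your witness $\cong_{Q_n}^*$ ($n\geq 3$) together with its regularity and set property; non-bisimulation then follows because Theorem 5, or the argument $Q_i\sim Q_j\Rightarrow Q_1\sim Q_2$ in its proof, rules out $\cong_{Q_n}^*$ being a regular bisimulation. Your direct check that clause (ii) fails at the pair $(Q_1,Q_3)$ is a more explicit version of the same point. One small detail to add: $Q_2\not\cong Q_1^*$ even though both have two vertices, since only the point of $Q_2$ carries a self-loop.
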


Theorem 5 gives a new anti-foundation axiom family $\afa^{\cong_{Q_n}^*}$ ($n\geq3$): a graph is an exact picture iff it is $\cong_{Q_n}^*$-extensional. It is easy to prove that being $\cong_{Q_n}^*$-extensional is equivalent with being $\cong^*$-extensional and having no $Q_n$ as a descendant subgraph. Thus, $\afa^{\cong_{Q_n}^*}$ is equivalent with: a graph is an exact picture iff it is $\cong^*$-extensional and has no $Q_n$ as a descendant subgraph.

The family $\afa^{\cong_{Q_n}^*}$ ($n\geq3$) witnesses that the answer to Question 1 is negative (unless $\ZFC^-$ itself is inconsistent). Thus, $\AFA^\sim$ ($\sim$ is one of the regular bisimulations) doesn't exhaust all normal (and relatively consistent) anti-foundation axioms, and should be generalized at least to $\AFA^\sim$ ($\sim$ is regular and has the set property).

\section{Another new anti-foundation axiom family}

When using regular bisimulations to induce anti-foundation axioms, the most important ones are the minimum regular bisimulation $\cong^*$ and the maximum regular bisimulation $\equiv_\mathrm{M}$. Now, we use regular graph relations having the set property to induce anti-foundation axioms, so a natural question is:

\begin{question}
    Does there exist the maximum regular graph relation having the set property?
\end{question}

(The minimum regular graph relation having the set property is $\cong^*$ as well.)

In this section, we will construct another new anti-foundation axiom family and use it to give a negative answer to this question.

Recall that a regular graph relation $\sim$ has the set property iff the following hold:

\begin{enumerate}[label=(\roman*)]
    \item If $G\not\cong G'$ and $G\sim G'$, then at least one of them is not $\sim$-extensional. Equivalently, if non-*-graphs $G\not\cong G'$ and $G\sim G'$, then at least one of them is not $\sim$-extensional (a graph $G$ is called a *-graph if $G=(G_0)^*$, where $G_0$ has a point having a parent).
    \item If $G$ has a point having no parents, $a\in G\backslash\{p_G\}$ such that $G\not\approx G[a]$ and $G\sim G[a]$, then there exist some $b \not= c \in G\backslash\{p_G\}$ such that $G[b] \sim G[c]$.
\end{enumerate}

A natural idea is to define the maximum regular graph relation having the set property by modifying the definition of the maximum bisimulation:

\begin{definition}
    $G\sim_\mathrm{M} G'$ iff there exists some $B\subseteq (G[G]\cup G'[G'])\times(G[G]\cup G'[G'])$ such that $B$ is a regular graph relation satisfying (i)(ii) on $G[G]\cup G'[G']$ and $(G,G')\in B$.
\end{definition}

Admittedly, in order to prove that $\sim_\mathrm{M}$ is the maximum regular graph relation having the set property, it's sufficient to prove that $\sim_\mathrm{M}$ is regular and has the set property, and in order to prove this, it's sufficient to prove that $\sim_\mathrm{M}$ is transitive, since all the other requirements are obvious. However, things are not that simple:

\begin{theorem}
    $\sim_\mathrm{M}$ is not transitive.
\end{theorem}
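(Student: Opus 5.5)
The plan is to give an explicit counterexample: three small graphs $G_1,G_2,G_3$ with $G_1\sim_\mathrm{M}G_2$ and $G_2\sim_\mathrm{M}G_3$ but $G_1\not\sim_\mathrm{M}G_3$. The idea is that (i) forbids relating two non-isomorphic graphs that are both $B$-extensional. A direct witness for $G_1\sim_\mathrm{M}G_3$ may only use the descendant subgraphs of $G_1$ and $G_3$. A chain through $G_2$, by contrast, can use a non-extensional intermediary whose "redundancy" lies in a part of the universe that $G_1,G_3$ do not see.

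\textbf{The graphs.} I would take
\begin{itemize}
\item $G_1=Q_1$, a single vertex with a self-loop;
\item $G_3=E$, a single vertex with no edges (the exact picture of $\emptyset$);
\item $G_2=H$, with point $p$ carrying a self-loop and edges $p\to c$, $p\to d$, $p\to e$, where $c,d$ each carry a self-loop and $e$ has no children.
\end{itemize}
Then $H[c]\cong H[d]\cong Q_1$ with $c\neq d$, so $H$ is not $\cong$-extensional. Since $B$ is required to contain $\cong$ on its universe, $H$ is automatically not $B$-extensional for any admissible $B$. Up to isomorphism the relevant universes are $\{Q_1,H,E\}$ for the pairs $(G_1,G_2)$ and $(G_2,G_3)$, and $\{Q_1,E\}$ for the pair $(G_1,G_3)$.

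\textbf{$G_1\sim_\mathrm{M}G_2$ and $G_2\sim_\mathrm{M}G_3$.} For the first, let $B_1$ be $\cong$ on $\{Q_1,H,E\}$ together with the pairs $(Q_1,H)$ and $(H,Q_1)$ (closed under isomorphic copies). The steps to check are:
\begin{itemize}
\item $B_1$ is an equivalence relation.
\item $B_1$ contains $\approx$ on the universe. Here $C(p)$ yields $\{H,Q_1,E\}$, $Q_1$ yields $\{Q_1\}$ and $E$ yields $\emptyset$, so $\approx$ is just $\cong$.
\item Condition (i) holds: the only non-isomorphic related pair is $(Q_1,H)$, and $H$ is not $B_1$-extensional.
\item Condition (ii) holds vacuously: the only graph whose point has no parents is $E$, and $E$ has no non-point vertices.
\end{itemize}
The same argument with $(H,E)$ in place of $(Q_1,H)$ gives a witness $B_2$ for $G_2\sim_\mathrm{M}G_3$.

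\textbf{$G_1\not\sim_\mathrm{M}G_3$.} Any admissible $B$ on the universe $\{Q_1,E\}$ with $(Q_1,E)\in B$ relates two non-isomorphic graphs. By (i), one of them must fail to be $B$-extensional. But $Q_1$ and $E$ each have only one vertex, so both are trivially $B$-extensional, which contradicts (i). Hence no such $B$ exists and transitivity fails.

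\textbf{Main obstacle.} The delicate point is bookkeeping rather than depth. I must make sure that regularity of $B$ "on $G[G]\cup G'[G']$" is read as regularity restricted to that set, closed under isomorphic copies there. I must also verify that the $\approx$ and $\cong$ closures forced on the two positive witnesses do not accidentally relate $Q_1$ with $E$. If they did, transitivity inside $B_1$ or $B_2$ would reintroduce the forbidden pair and break (i). I would check the $\approx$-classes explicitly for each of the three vertex types, as listed above, to rule this out.
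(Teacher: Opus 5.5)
Your counterexample is correct and follows the paper's strategy: you show $Q_1\sim_\mathrm{M}X\sim_\mathrm{M}G_\varnothing$ for a non-extensional intermediary $X$, and you rule out $Q_1\sim_\mathrm{M}G_\varnothing$ exactly as the paper does, since any admissible $B$ on $\{Q_1,G_\varnothing\}$ would relate two non-isomorphic one-vertex (hence $B$-extensional) graphs. The only difference is the intermediary: the paper's $J_1$ has no descendant subgraph isomorphic to $Q_1$ or $G_\varnothing$, so the full relation serves as the witness in both cases, whereas your $H$ contains both, which is why you need the tailored witnesses $B_1,B_2$ (and your checks of regularity, (i) and (ii) for them go through).
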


\begin{proof}
    Consider the following graph $J_1$:
    
\begin{figure}[htbp]
\[\begin{tikzcd}
	\bullet \\
	\\
	\bullet
	\arrow[from=1-1, to=1-1, loop, in=55, out=125, distance=10mm]
	\arrow[bend left, from=1-1, to=3-1]
	\arrow[bend left, from=3-1, to=1-1]
\end{tikzcd}\]
\caption{$J_1$}
\end{figure}

    Note that $G_\varnothing\sim_\mathrm{M} J_1$ (it's easy to check that $B=(G_\varnothing[G_\varnothing]\cup J_1[J_1])\times(G_\varnothing[G_\varnothing]\cup J_1[J_1])$ is regular and satisfies (i)(ii)), $Q_1\sim_\mathrm{M} J_1$ (similarly).
    However, obviously, $G_\varnothing\not\sim_\mathrm{M} Q_1$, so $\sim_\mathrm{M}$ is not transitive.
\end{proof}

Nevertheless, this motivates us to make the following definition:

\begin{definition}
    $G\sim_\mathrm{m} G'$ iff there exists some $B\subseteq (G[G]\cup G'[G'])\times(G[G]\cup G'[G'])$ such that $B$ is a regular graph relation satisfying (i$^+$)(ii) on $G[G]\cup G'[G']$ and $(G,G')\in B$, where (i$^+$) is: If non-*-graphs $G\not\cong G'$ and $G\sim G'$, then both of them are not $\sim$-extensional.
\end{definition}

Obviously, (i$^+$) implies (i).

\begin{theorem}
    $\sim_\mathrm{m}$ is the maximum regular graph relation satisfying (i$^+$)(ii). In particular, it has the set property.
\end{theorem}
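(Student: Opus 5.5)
The plan is to prove three things in order: $\sim_\mathrm{m}$ is regular, it satisfies (i$^+$)(ii), and it contains every regular graph relation satisfying (i$^+$)(ii). The set property then follows from Theorem 3, since (i$^+$) implies (i) and (i)(ii) is exactly (III) of Theorem 3.

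\emph{Locality and maximality.} Whether a graph $H$ is $\sim$-extensional depends only on how $\sim$ behaves on $H[H]$. Hence conditions (i$^+$) and (ii), evaluated on a downward closed family such as $G[G]\cup G'[G']$, depend only on the restriction of $\sim$ to that family. So if $\sim$ is regular, satisfies (i$^+$)(ii) and $G\sim G'$, then the restriction of $\sim$ to $G[G]\cup G'[G']$ witnesses $G\sim_\mathrm{m} G'$. This gives maximality.

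\emph{Regularity, the easy parts.} I would first check that $\cong^*$ satisfies (i$^+$)(ii). For (i$^+$), two non-*-graphs related by $\cong^*$ are already isomorphic. For (ii), use Lemma 2, because $\cong^*$ is a regular bisimulation. Then:
\begin{itemize}
\item the restriction of $\cong^*$ witnesses that $\sim_\mathrm{m}$ contains $\approx$ and $\cong$, and is reflexive;
\item the transpose of a witness is again a witness, so $\sim_\mathrm{m}$ is symmetric.
\end{itemize}

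\emph{Transitivity, the main obstacle.} Suppose $B_1$ witnesses $G\sim_\mathrm{m}G'$ and $B_2$ witnesses $G'\sim_\mathrm{m}G''$. Let $D=G[G]\cup G'[G']\cup G''[G'']$ and let $B$ be the equivalence closure on $D$ of $B_1\cup B_2\cup{\cong^*}$. By locality, it suffices to show that $B$ satisfies (i$^+$)(ii); then its restriction to $G[G]\cup G''[G'']$ is a witness. The key observation is a propagation lemma. Take any chain $H_0,\dots,H_k$ with consecutive terms related by $B_1$, $B_2$ or $\cong^*$. Since $B\supseteq B_i$, being $B$-extensional implies being $B_i$-extensional. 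I distinguish two kinds of link:
\begin{itemize}
\item A \emph{nontrivial} link relates non-isomorphic non-*-graphs within some $B_i$. By (i$^+$) for $B_i$, both ends are non-$B_i$-extensional, hence non-$B$-extensional.
\item A \emph{trivial} link is an isomorphism or a $\cong^*$-step. It preserves $B$-non-extensionality, because $H$ and $H^*$ have the same proper descendants.
\end{itemize}
This is exactly where (i$^+$) rather than (i) is essential, as the $J_1$ example of Theorem 6 shows. Now (i$^+$) for $B$ follows. If $H\not\cong K$ are non-*-graphs related by $B$, some link in the chain is nontrivial. Propagation along the chain then shows that every graph in it, including $H$ and $K$, is non-$B$-extensional.

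For (ii), let $G$ have a parentless point, be quasi-$B$-extensional, and satisfy $G\,B\,G[a]$ with $a\ne p_G$. Since $p_G\notin G[a]$, the graph $G[a]$ is $B$-extensional. By propagation, no link of the chain from $G[a]$ to $G$ can be nontrivial, so $G\cong^*G[a]$. Lemma 2 applied to $\cong^*$ then gives $G\approx G[a]$. Hence $B$ satisfies (ii) (in its contrapositive form), which completes transitivity. The delicate points I expect to need care are the bookkeeping of *-graphs in the trivial links and the verification that extensionality is monotone and local as claimed.
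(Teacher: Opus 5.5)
Your overall structure matches the paper's: locality gives maximality, everything reduces to transitivity, you close $B_1\cup B_2\cup{\cong^*}$, and you apply (i$^+$) of the $B_i$ along chains. Deriving (ii) from (i$^+$) plus Lemma~2 for $\cong^*$ is a neat shortcut for the paper's case analysis. \textbf{However, the propagation lemma that both halves of your transitivity proof rest on is false.}

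\emph{The $\cong^*$-step does not propagate.} A $\cong^*$-step does not carry $B$-non-extensionality from $H^*$ to $H$. Take $Q_1$: $Q_1^*$ is never $B$-extensional, since its point and its only child $a_1$ have the same children, so $Q_1^*\approx Q_1^*[a_1]$. But $Q_1$ has a single vertex and is $B$-extensional for every $B$. Sharing proper descendants only transports a witness pair made of two proper descendants. It does not transport the pair (new point, $p_H$) that every $H^*$ carries.

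\emph{Non-$*$-graphs related by $\cong^*$ need not be isomorphic.} Let $K$ have vertices $p,b,c,d$, edges $p\to c$, $b\to c$, $c\to b$, $c\to d$, $d\to p$, and point $p$. Then $K^*\cong K[b]^*$, and both points have parents. Yet $K\not\cong K[b]$: an isomorphism must send $p\mapsto b$ and fix $c$, hence map $C_K(c)=\{b,d\}$ onto itself, which contradicts injectivity. So a chain of trivial links can join non-isomorphic non-$*$-graphs, and ``some link is nontrivial'' does not follow.

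\emph{The link classification is incomplete.} Your dichotomy omits $B_i$-links with a $*$-graph endpoint whose other end is non-isomorphic to it.

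For $\cong^*$ itself, (i$^+$) still holds, but for a different reason. In such a pair, each graph has a vertex other than its point with the same children as its point, so it is non-extensional.

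\textbf{The conclusions are true, but the bookkeeping must be done as in the paper.}
\begin{enumerate}
\item Normalize the chain. Replace each $*$-graph $X$ by $X[b]$ with $b\neq p_X$ and $C(b)=C(p_X)$. This stays inside $D$, and the links survive because each $B_i$ is transitive and contains $\approx$.
\item Do not propagate along the whole chain. Take the first index $j$ with $H_{j+1}\not\cong H_0$, so that $H_0\cong H_j\not\cong H_{j+1}$ are non-$*$-graphs.
\item If that link lies in some $B_i$, (i$^+$) for $B_i$ makes $H_j$, hence $H_0$, non-$B$-extensional. If it is a $\cong^*$-link, $H_j$ is non-extensional by the remark above.
\end{enumerate}
Only isomorphisms are then used for transport.

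With (i$^+$) for $B$ established this way, your (ii) argument goes through. $G[a]$ is $B$-extensional, hence not a $*$-graph. So (i$^+$) forces $G[a]\cong G$, or $G[a]\cong G[b]$ when $G=G[b]^*$. This gives $G\cong^*G[a]$, and Lemma~2 for $\cong^*$ yields $G\approx G[a]$.
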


\begin{proof}
    Again, it's sufficient to prove that $\sim_\mathrm{m}$ is transitive.

    If $G_1\sim_\mathrm{m} G_0$ (i.e., there exists some corresponding $B_1$) and $G_0\sim_\mathrm{m} G_2$ (i.e., there exists some corresponding $B_2$), we prove that $G_1\sim_\mathrm{m} G_2$. Replace $B_1$ (resp. $B_2$) with the transitive closure of $B_1 \cup \cong^*$ (resp. $B_2 \cup \cong^*$) and let $B$ be the transitive closure of $B_1\cup B_2$, obviously, its restriction to $(G_1[G_1]\cup G_2[G_2])\times (G_1[G_1]\cup G_2[G_2])$ is regular.
    Below we prove that its restriction to $(G_1[G_1]\cup G_2[G_2])\times (G_1[G_1]\cup G_2[G_2])$ satisfies (i$^+$)(ii).

    (i$^+$): If non-*-graphs $G\not\cong G'$ and $GBG'$, then there exists a sequence of non-*-graphs $G^i$ and $\delta_i\in\{1,2\}$ such that $G=G^0B_{\delta_0}G^1\dots G^{n-1}B_{\delta_{n-1}}G^n=G'$.
    Since $G\not\cong G'$, there exists some $i$ such that $G\cong G^i\not\cong G^{i+1}$, so $G^i$ is not $B_{\delta_i}$-extensional, and then $G$ is not $B$-extensional.
    Similarly, $G'$ is not $B$-extensional, so the requirement is satisfied.

    (ii): If $G$ has a point having no parents, $a\in G\backslash\{p_G\}$ such that $G\not\approx G[a]$ and $GBG[a]$, we proceed by cases.

    If $G$ is a $*$-graph, i.e. $G=G[b]^*$, then $b\neq a$ (since $G\not\approx G[a]$ but $G\approx G[b]$) $\in G\backslash\{p_G\}$ and $G[b]BG[a]$, so the requirement is satisfied.
    If $G[a]$ is a $*$-graph, similarly, the requirement is satisfied.
    Below we assume that neither of them is a $*$-graph.

    If $G\cong G[a]$, denote the isomorphism by $f$, we have that $a\neq f(a)\in G\backslash\{p_G\}$ and $G[a]BG[f(a)]$, so the requirement is satisfied.

    If $G\not\cong G[a]$, by the same argument used above to prove that $B$ satisfies (i$^+$), we have that $G$ is not $B_{\delta}$-extensional and $G[a]$ is not $B_{\delta'}$-extensional for some $\delta,\delta'\in\{1,2\}$.
    
    If $G$ is not $B_{\delta}$-extensional, then either there exist some $b\neq c\in G\backslash\{p_G\}$ such that $G[b]B_{\delta}G[c]$, so the requirement is satisfied; or there exists some $b\in G\backslash\{p_G\}$ such that $GB_{\delta}G[b]$, note that $G\not\approx G[b]$ (otherwise $G=G[b]^*$ is a $*$-graph), so by the fact that $B_{\delta}$ satisfies (ii), the requirement is satisfied.

    If $G[a]$ is not $B_{\delta'}$-extensional, then there exist some $b\neq c\in G\backslash\{p_G\}$ such that $G[b]B_{\delta'}G[c]$, so the requirement is satisfied.
\end{proof}

By the way, the argument used above to prove that $B$ satisfies (ii) can in fact be used to prove:

\begin{corollary}
    (i$^+$) implies (ii).
\end{corollary}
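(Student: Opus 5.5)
We have to show: if $\sim$ is a regular graph relation satisfying (i$^+$), then it satisfies (ii). The plan is to run the case analysis from the last part of the proof of Theorem 7 directly for $\sim$ itself, in place of $B$. There we never needed the $B_\delta$ to satisfy (ii) in the key cases. We only used that non-isomorphic but related non-*-graphs are both non-extensional, which is exactly (i$^+$).

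So fix $G$ with a point having no parents and $a\in G\backslash\{p_G\}$ with $G\not\approx G[a]$ and $G\sim G[a]$. We must find $b\neq c\in G\backslash\{p_G\}$ with $G[b]\sim G[c]$. The cases are as follows.
\begin{enumerate}[label=(\alph*)]
\item If $G$ is a *-graph, write $G=G[b]^*$. Then $G\approx G[b]$, so $b\neq a$. By transitivity and regularity we get $G[b]\sim G\sim G[a]$.
\item If $G[a]$ is a *-graph, then $G[a]=G[a][c]^*$ for some $c$. But the point of $G[a]$ is $a$, which has a parent in $G$, so the point of a *-graph (which has no parents inside the graph) cannot be $a$ unless $a$'s only parents lie outside $G[a]$. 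We handle it as in Theorem 7: $G[a]\approx G[c]$ with $c\in G\backslash\{p_G\}$, $c\neq a$, and $G[a]\sim G[c]$ by (ii) of regularity.
\item If neither is a *-graph and $G\cong G[a]$ via $f$, then $f(a)\neq a$, since otherwise the isomorphism would carry $p_G$ and $a$ to points at the same depth. We argue as in the paper: $f(a)\in G\backslash\{p_G\}$ and $G[a]\sim G[f(a)]$, because $G[a]\cong G[a][f(a)]$, which is a descendant subgraph.
\item If neither is a *-graph and $G\not\cong G[a]$, then (i$^+$) applies directly, and both $G$ and $G[a]$ are not $\sim$-extensional.
\end{enumerate}

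In case (d), the non-extensionality of $G[a]$ yields $b\neq c$ in $G[a]$, and hence in $G\backslash\{p_G\}$, with $G[b]\sim G[c]$. This finishes the argument without ever invoking (ii). The other branch used in Theorem 7, namely $G\sim G[b]$ with the appeal to (ii) of $B_\delta$, is thereby avoided. This is the point where the corollary's content lies.

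The main obstacle is the bookkeeping in the degenerate cases (b) and (c). In (b), I need to verify that the witness $c$ really is distinct from $a$ and lies below $p_G$. In (c), I need to check that $f(a)\neq a$ using that $p_G$ has no parents while $a$ does. I would mirror exactly the paper's treatment of those cases, checking only that they used regularity of $\sim$ and nothing about (ii).
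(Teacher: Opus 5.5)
Your proposal is correct and is essentially the paper's argument: the paper gets the corollary by rerunning the case analysis from the proof of Theorem 7 with $\sim$ in place of $B$. Your observation is the step that keeps that rerun from appealing to (ii): in the last case, the non-$\sim$-extensionality of $G[a]$, which (i$^+$) supplies directly, already yields $b\neq c\in G[a]\subseteq G\backslash\{p_G\}$ with $G[b]\sim G[c]$. Two of your justifications can be stated more simply: in (c), $f(a)\neq a$ because $f(p_G)=a$ and $f$ is injective; in (b), $c\neq a$ because $c$ has a parent inside $G[a]$ while the point $a$ of the *-graph $G[a]$ has none.
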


$\sim_\mathrm{m}$ has the following natural variants:

\begin{definition}
    $G\sim_{\mathrm{m}_Q}G'$ ($Q$ is one of the definable graphs) iff there exists some $B\subseteq (G[G]\cup G'[G'])\times(G[G]\cup G'[G'])$ such that $B$ is a regular graph relation satisfying (i$_Q^+$)(ii) on $G[G]\cup G'[G']$ and $(G,G')\in B$, where (i$_Q^+$) is: If non-*-graphs $G\not\cong G'$ and $G\sim G'$, then both of them are either $\cong Q$ or not $\sim$-extensional.
\end{definition}

Obviously, (i$_Q^+$) implies (i) and (i$^+$) implies (i$_Q^+$).

\begin{theorem}
    $\sim_{\mathrm{m}_Q}$ is the maximum regular graph relation satisfying (i$_Q^+$)(ii). In particular, it has the set property.
\end{theorem}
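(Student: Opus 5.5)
The plan is to mirror the proof of Theorem 7 almost verbatim, with (i$^+$) replaced by (i$_Q^+$). As there, the definition of $\sim_{\mathrm{m}_Q}$ makes it the union of all regular relations satisfying (i$_Q^+$)(ii) restricted to the descendant subgraphs of the two graphs involved. So reflexivity, symmetry, containment of $\approx$ and $\cong$, and maximality are immediate. The set property then follows from (i$_Q^+$) $\Rightarrow$ (i) together with Theorem 3. Thus the only real task is transitivity.

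Suppose $G_1\sim_{\mathrm{m}_Q}G_0$ via $B_1$ and $G_0\sim_{\mathrm{m}_Q}G_2$ via $B_2$. I would first replace each $B_\delta$ by the transitive closure of $B_\delta\cup\cong^*$; this preserves (i$_Q^+$)(ii), since $\cong^*$ only glues a $*$-graph to its non-$*$ version and isomorphic copies. Then let $B$ be the transitive closure of $B_1\cup B_2$ restricted to $(G_1[G_1]\cup G_2[G_2])^2$, which is clearly regular and contains $(G_1,G_2)$.

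For (i$_Q^+$): take non-$*$-graphs $G\not\cong G'$ with $GBG'$. Write a chain $G=G^0B_{\delta_0}G^1\cdots B_{\delta_{n-1}}G^n=G'$ through non-$*$-graphs (passing to non-$*$ versions via $\cong^*$). Choose the first $i$ with $G\cong G^i\not\cong G^{i+1}$. By (i$_Q^+$) for $B_{\delta_i}$, $G^i$ is either $\cong Q$ or not $B_{\delta_i}$-extensional. In the first case $G\cong Q$. In the second, $G$ is not $B$-extensional: $B\supseteq B_{\delta_i}$, and the witnesses to non-extensionality transfer along the isomorphism $G\cong G^i$, because $B$ contains $\cong$. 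The same argument at the other end handles $G'$.

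For (ii), I would rerun the case analysis of Theorem 7. If $G$ or $G[a]$ is a $*$-graph, the witnesses $b\neq c$ come directly. If $G\cong G[a]$, use the isomorphism $f$ to get $a\neq f(a)$. Otherwise apply the (i$_Q^+$) argument to the chain from $G$ to $G[a]$; both are non-$*$ and non-isomorphic.

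The new obstacle is the alternative ``$\cong Q$''. If $G$ or $G[a]$ is only known to be $\cong Q$ rather than non-$B_\delta$-extensional, the previous argument yields no $b\neq c$. To handle this, I would first try the following. Since $G\not\cong G[a]$, at least one of them is not $\cong Q$. Whichever is not $\cong Q$ is non-$B_\delta$-extensional for the relevant $\delta$, and I would take that one. If it is $G[a]$, its non-extensionality gives directly $b\neq c$ below $a$, hence in $G\setminus\{p_G\}$. If it is $G$, then either two distinct non-root vertices are $B_\delta$-related, or $GB_\delta G[b]$ for some non-root $b$ with $G\not\approx G[b]$, and (ii) for $B_\delta$ finishes. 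I expect the main difficulty to be checking that the chosen endpoint really inherits non-extensionality when the chain passes through copies of $Q$. If that fails, the fallback is Corollary 4's analogue, proving directly that (i$_Q^+$) implies (ii), so that only (i$_Q^+$) needs verification for $B$.
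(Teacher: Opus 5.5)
Your proposal is correct and is essentially the paper's proof, which says only that it mirrors Theorem 7. Your adaptation of (ii) is exactly the needed change: use $G\not\cong G[a]$ to pick an endpoint not isomorphic to $Q$, and invoke (ii) for $B_\delta$ when the non-extensionality witness involves the root. The difficulty you flag does not arise, because choosing the first $i$ with $G\cong G^i\not\cong G^{i+1}$ makes copies of $Q$ elsewhere in the chain irrelevant. Discard the fallback, however, because (i$_Q^+$) does not imply (ii): take $Q=Q_1$, let $G$ have point $p$ with children $a,e$, where $a\to a$ and $e$ is a leaf, and merge the $\cong^*$-classes of $G$ and $Q_1$; then (i$_{Q_1}^+$) holds, yet $G\sim G[a]$ and $G\not\approx G[a]$ while $G[a]=Q_1\not\sim G_\varnothing=G[e]$, so the root case genuinely needs (ii) for $B_\delta$.
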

\begin{proof}
    The proof is similar to that of Theorem 7.
\end{proof}

\begin{corollary}
(finitary arithmetic)
    \begin{enumerate}[label=(\arabic*)]
        \item $\zfc^-+\afa^{\sim_\mathrm{m}}$ is consistent relative to $\zfc^-$.
        \item If $\zfc^-\vdash$``$\sim_{\mathrm{m}_Q}$ is absolute for $\uni_c^{\sim_{\mathrm{m}_Q}}$'', then $\zfc^-+\afa^{\sim_{\mathrm{m}_Q}}$ is consistent relative to $\zfc^-$.
    \end{enumerate}
\end{corollary}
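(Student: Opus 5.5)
The plan is to obtain both items from Corollary 3(2), applied inside $\zfc^-$. By Theorem 7, $\zfc^-$ proves that $\sim_\mathrm{m}$ is regular and has the set property. By Theorem 8, it proves the same for $\sim_{\mathrm{m}_Q}$; here we use that $Q$ is definable, so that $\sim_{\mathrm{m}_Q}$ is a formula of $\mathcal{L}_\in$. So for (2) nothing further is needed: the hypothesis supplies absoluteness for $\uni_c^{\sim_{\mathrm{m}_Q}}$, and Corollary 3(2) yields relative consistency of $\zfc^-+\afa^{\sim_{\mathrm{m}_Q}}$. For (1) the only remaining task is to prove in $\zfc^-$ that $\sim_\mathrm{m}$ is absolute for $\uni_c^{\sim_\mathrm{m}}$.

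For absoluteness, I would use the remark after Theorem 4: $\uni_c^{\sim_\mathrm{m}}$ may be taken to be the class of representatives of $\uni_0^{\sim_\mathrm{m}}$ modulo $\cong$, with membership $\leftarrow$.

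The key observation is that the definition of $\sim_\mathrm{m}$ is local. $G\sim_\mathrm{m}G'$ asks for a subset $B$ of $(G[G]\cup G'[G'])^2$ satisfying conditions that refer only to the following:
\begin{itemize}
\item descendant subgraphs of $G$ and $G'$;
\item isomorphism between them, and the relation $\approx$;
\item being a $*$-graph;
\item $B$-extensionality of graphs in $G[G]\cup G'[G']$;
\item the quantifier in (ii) over vertices of such graphs.
\end{itemize}
All of these are bounded quantifications over sets built from $G,G'$: their power sets, and maps between vertex sets.

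So I would show that for $x,y\in\uni_c^{\sim_\mathrm{m}}$ with $ext(x)=G$ and $ext(y)=G'$, each of these notions is preserved by $ext$. This needs three facts.
\begin{enumerate}
\item The model satisfies $\zfc^-$ (Theorem 4), so it computes power sets and functions.
\item Every external subset $B$, and every external map between the relevant vertex sets, is represented in the model. Here I would invoke the set property in the form of (iii) in the proof of Theorem 1: every subset $X$ of $\uni_c^{\sim_\mathrm{m}}$ is $C(x)$ for some $x$. This makes the model ``full'' for subsets of sets, so $ext$ commutes with power set, pairing and function spaces.
\item Consequently isomorphism, the relation $\approx$, descendant subgraphs and the $*$-construction are absolute; each is defined by a bounded formula over these sets.
\end{enumerate}

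The step I expect to be the main obstacle is this fullness argument. I would do it by induction on the complexity of the defining formula, checking that every unbounded-looking quantifier in the definition of $\sim_\mathrm{m}$ is really bounded by a set obtained from $G,G'$ via power set and products. Each such bounding set is computed correctly in $\uni_c^{\sim_\mathrm{m}}$, since (iii) gives every external subset of an element an internal code.

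This is also exactly where the argument breaks for general $Q$. The clause ``$\cong Q$'' in (i$_Q^+$) compares a graph with the definable object $Q$, whose interpretation in $\uni_c^{\sim_{\mathrm{m}_Q}}$ need not match the external $Q$. For this reason (2) keeps absoluteness as a hypothesis, while for $\sim_\mathrm{m}$ no external parameter appears and the induction goes through.
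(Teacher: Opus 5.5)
Your proposal is correct and follows the paper's route: the corollary comes from Theorems 7 and 8 (regularity and the set property) together with Corollary 3(2). In (2) absoluteness is supplied by hypothesis, and in (1) the paper leaves absoluteness of $\sim_\mathrm{m}$ implicit as obvious, just as it does for $\cong_{Q_n}^*$ in the proof of Theorem 5. Your fullness argument, via (iii) in the proof of Theorem 1, is the right justification of that implicit step, provided you also note that fullness makes $\omega$ absolute, and hence finite paths and descendant subgraphs.
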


\begin{theorem} (finitary arithmetic) For every $\sim$ such that $\zfc^-\vdash$``$\sim$ is a regular bisimulation'':
\begin{enumerate}[label=(\arabic*)]
        \item (If $\zfc^-$ is consistent, then) $\zfc^-\not\vdash\afa^{\sim_\mathrm{m}}\Leftrightarrow\afa^\sim$.
        \item If $\zfc^-\vdash$``$\sim_{\mathrm{m}_Q}$ is absolute for $\uni_c^{\sim_{\mathrm{m}_Q}}$'', then (if $\zfc^-$ is consistent, then) $\zfc^-\not\vdash\afa^{\sim_{\mathrm{m}_Q}}\Leftrightarrow\afa^\sim$.
    \end{enumerate}
\end{theorem}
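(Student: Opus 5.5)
We follow the template of the second half of the proof of Theorem 5: find a concrete graph that is $\sim$-extensional for every regular bisimulation $\sim$ but is not $\sim_\mathrm{m}$-extensional (resp. not $\sim_{\mathrm{m}_Q}$-extensional), or vice versa. Then, working in $\zfc^-$, we show $\afa^{\sim_\mathrm{m}}\Rightarrow\neg\afa^\sim$. Back in finitary arithmetic, Corollary 5 (with the absoluteness hypothesis in case (2)) gives $\con(\zfc^-)\Rightarrow\con(\zfc^-+\afa^{\sim_\mathrm{m}})$. Hence $\con(\zfc^-+\afa^{\sim_\mathrm{m}}\not\Leftrightarrow\afa^\sim)$, which is the claim.

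The witness I would try first is a pair like $G_\varnothing$ and $J_1$ from the proof of Theorem 6, or better the pair $Q_1$ and $J_1$. These are non-$*$, non-isomorphic graphs that no regular bisimulation can identify.

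\emph{Step one.} Show that $J_1$ is not $\sim_\mathrm{m}$-extensional. Its two vertices $u$ (with loop) and $v$ give $J_1[u]=J_1$ and $J_1[v]$, the graph with $v\to u\to u$, $u\to v$. I would exhibit a relation $B$ on $J_1[J_1]$, for instance the full relation, and check that it is regular and satisfies (i$^+$) and (ii). For (i$^+$), any non-isomorphic non-$*$ pair identified by $B$ must consist of two non-$B$-extensional graphs, which holds because $J_1[u]\mathrel{B}J_1[v]$ with $u\neq v$. For (ii), we may use Corollary 4, since (i$^+$) implies (ii). This gives $J_1\sim_\mathrm{m}J_1[v]$ with $u\ne v$, so $J_1$ is not $\sim_\mathrm{m}$-extensional. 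Under $\afa^{\sim_\mathrm{m}}$, $J_1$ is therefore not an exact picture.

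\emph{Step two.} Show that $J_1$ is $\sim$-extensional for every regular bisimulation $\sim$, so that under $\afa^\sim$ it is an exact picture. Suppose $J_1[u]\sim J_1[v]$. Since $C(u)=\{u,v\}$ and $C(v)=\{u\}$, the bisimulation clauses force all pairs among $J_1[u],J_1[v]$ to be related. The bisimulation property alone does not then yield a contradiction, because $\equiv_\mathrm{M}$ does identify them: $\equiv_\mathrm{M}$ collapses $J_1$ to $Q_1$. So this witness fails for $\sim=\equiv_\mathrm{M}$, and we need a witness in the opposite direction. The better strategy is a graph that is $\sim_\mathrm{m}$-extensional but is \emph{not} $\sim$-extensional for any regular bisimulation, analogous to $Q_2$ in Theorem 5.

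\emph{Step three.} Candidates are graphs $G$ with two distinct vertices $a,b$ such that $\cong^*$, the minimum regular bisimulation (Fact 1), already forces $G[a]\cong^*G[b]$, so that every regular bisimulation identifies them. Yet maximality of $\sim_\mathrm{m}$ (Theorem 7) must still separate them. This is impossible, since $\sim_\mathrm{m}\supseteq\cong^*$. So this direction fails too. The argument must instead be a genuine two-sided comparison: a graph that is $\sim_\mathrm{m}$-non-extensional but $\sim$-extensional, where $\sim$ ranges over bisimulations including $\equiv_\mathrm{M}$. Hence we need a graph whose $\sim_\mathrm{m}$-collapse is not bisimulation-closed. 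A natural choice is $Q_1$ together with $G_\varnothing$ inside one graph, e.g. $G$ with point $p\to q$, where $q$ has a self-loop, and $p\to r$ with $r$ a leaf. The pair $G[q]\cong Q_1$, $G[r]\cong G_\varnothing$ are not $\equiv_\mathrm{M}$-related. Then show that some regular $B$ satisfying (i$^+$) identifies two non-$*$ descendants of a suitably enlarged graph in a way no bisimulation can, in the spirit of the $J_1$ computation.

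The main obstacle is exactly this choice of witness. We must find a single explicit finite graph whose $\sim_\mathrm{m}$-extensionality status differs from its status under \emph{every} regular bisimulation simultaneously, from $\cong^*$ up to $\equiv_\mathrm{M}$. I would try graphs built from $J_1$-like cycles attached to $Q_1$ and $G_\varnothing$, checking both extremes. For (2) the same witness should work provided it avoids $Q$, or after adjusting it according to whether it is isomorphic to $Q$.
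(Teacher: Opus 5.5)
\textbf{There is a genuine gap: the proposal never produces a witness, and the witness is the whole content of the proof.}

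Your overall frame is right. You use the template of Theorem 5, with Corollary 5 giving consistency (plus the absoluteness hypothesis in case (2)). Your analysis correctly shows that a single witness must be a graph that is \emph{not} $\sim_\mathrm{m}$-extensional but is $\sim$-extensional for every regular bisimulation. The other direction is impossible because $\cong^*\subseteq\sim_\mathrm{m}$. You also correctly discard $J_1$, since $J_1[u]\equiv_\mathrm{M}J_1[v]$.

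Your concrete candidate ($p\to q$, $q\to q$, $p\to r$) cannot work as given. It is extensional and quasi-well-founded, hence $\sim_\mathrm{M}$-extensional by Theorem 11, and therefore $\sim_\mathrm{m}$-extensional as well, since $\sim_\mathrm{m}\subseteq\sim_\mathrm{M}$. Any witness must fail to be quasi-well-founded. Your worry about checking every $\sim$ from $\cong^*$ up to $\equiv_\mathrm{M}$ is also unnecessary. It suffices to argue from the bisimulation clauses alone, or equivalently to check that the witness is $\equiv_\mathrm{M}$-extensional, since $\equiv_\mathrm{M}$ is the maximum regular bisimulation.

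\textbf{The missing idea.} Put two vertices $a\neq b$ on a common cycle, so that $J[a]$ and $J[b]$ have the same vertex set. Then relating them makes \emph{both} automatically non-$B$-extensional, which is exactly what (i$^+$) demands. At the same time, decorate $a$ and $b$ differently, so that no bisimulation can relate them.

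The paper's graph $J_2$ does this, with edges $a\to b$, $b\to a$, $a\to c$, $c\to c$, $a\to d$, $b\to d$.
\begin{itemize}
\item \emph{$J_2$ is not $\sim_\mathrm{m}$-extensional.} Let $B$ be the identity on descendant subgraphs together with the pair $(J_2[a],J_2[b])$ and its reverse. $B$ is regular. The only non-isomorphic related pair consists of non-$*$-graphs that are both non-$B$-extensional, so (i$^+$) holds. Condition (ii) follows from Corollary 4. Hence $J_2[a]\sim_\mathrm{m}J_2[b]$.
\item \emph{$J_2$ is $\sim$-extensional for every bisimulation $\sim$.} $J_2[d]$ is the only childless descendant subgraph, so it is related to nothing else. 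If $J_2[c]\sim J_2[a]$ or $J_2[c]\sim J_2[b]$, matching the child $d$ forces $J_2[c]\sim J_2[d]$, which is impossible. If $J_2[a]\sim J_2[b]$, the child $c$ of $a$ must match a child of $b$ other than $d$, so $J_2[c]\sim J_2[a]$, which reduces to the previous case.
\end{itemize}
So $J_2$ witnesses $\afa^{\sim_\mathrm{m}}\Rightarrow\neg\afa^\sim$.

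For (2), no adjustment depending on $Q$ is needed. Since (i$^+$) implies (i$^+_Q$), we have $\sim_\mathrm{m}\subseteq\sim_{\mathrm{m}_Q}$, so the same $J_2$ is non-$\sim_{\mathrm{m}_Q}$-extensional for every $Q$.
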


\begin{proof}
    (1) Again, it's sufficient to prove that $\afa^{\sim_\mathrm{m}}\Rightarrow\neg\afa^\sim$ under $\ZFC^-$. Consider the following graph $J_2$:
    
\begin{figure}[htbp]
\[\begin{tikzcd}
	{c} & {a} && {b} \\
	\\
	&& {d}
	\arrow[from=1-1, to=1-1, loop, in=145, out=215, distance=10mm]
	\arrow[from=1-2, to=1-1]
	\arrow[bend left, from=1-2, to=1-4]
	\arrow[from=1-2, to=3-3]
	\arrow[bend left, from=1-4, to=1-2]
	\arrow[from=1-4, to=3-3]
\end{tikzcd}\]
\caption{$J_2$}
\end{figure}

    On one hand, $J_2$ is not $\sim_\mathrm{m}$-extensional, since $J_2[a]\sim_\mathrm{m} J_2[b]$: let $B=\{(J_2[v],J_2[v])\mid v\in J_2\}\cup \{(J_2[a],J_2[b]),(J_2[b],J_2[a])\}$, it's easy to check that it is regular and satisfies (i$^+$)(ii).
    
    On the other hand, $J_2$ is $\sim$-extensional:
    suppose not, since $J_2[d]\not\sim$ any other graph, either $J_2[c]\sim J_2[a]$ or $J_2[b]$, so $J_2[c]\sim J_2[d]$, a contradiction;
    or $J_2[a]\sim J_2[b]$, so $J_2[c]\sim J_2[a]$, which brings us back to
the former case.

    In conclusion, $J_2$ witnesses that $\afa^{\sim_\mathrm{m}}\Rightarrow\neg\afa^\sim$, thus ends the proof.

    (2) The proof is similar to that of (1).
\end{proof}

Theorem 9 gives us another anti-foundation axiom family, $\afa^{\sim_\mathrm{m}}$: a graph is an exact picture iff it is $\sim_\mathrm{m}$-extensional, and $\afa^{\sim_{\mathrm{m}_Q}}$ ($Q$ is one of the definable graphs): a graph is an exact picture iff it is $\sim_{\mathrm{m}_Q}$-extensional.

Using $\sim_{\mathrm{m}_Q}$, we can prove that the answer to Question 2 is negative.

\begin{theorem}
    There doesn't exist the maximum regular graph relation having the set property.
\end{theorem}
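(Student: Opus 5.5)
Suppose, towards a contradiction, that $\sim$ is the maximum regular graph relation having the set property. The plan is to find two graphs $Q,Q'$ for which the relations $\sim_{\mathrm{m}_Q}$ and $\sim_{\mathrm{m}_{Q'}}$ force incompatible requirements on $\sim$. Each of them is regular and has the set property by Theorem 8. Maximality therefore gives $\sim_{\mathrm{m}_Q}\subseteq\sim$ and $\sim_{\mathrm{m}_{Q'}}\subseteq\sim$. Since $\sim$ has the set property, it satisfies (i) by Theorem 3.

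For the witnesses I would reuse $J_1$ from the proof of Theorem 6, together with $Q=G_\varnothing$ and $Q'=Q_1$. The steps are as follows.

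First, I check that $B=(G_\varnothing[G_\varnothing]\cup J_1[J_1])^2$ is regular and satisfies (i$_{G_\varnothing}^+$)(ii). The non-*-graphs involved are (up to isomorphism) $G_\varnothing$ and the descendant subgraphs of $J_1$. Since $B$ is the full relation, every descendant subgraph of $J_1$ fails to be $B$-extensional, because $J_1$ has two vertices identified by $B$. The only other graph is $G_\varnothing$, which is $\cong G_\varnothing$, so (i$_{G_\varnothing}^+$) holds. Condition (ii) then follows either by the argument of Corollary 5 adapted to the $Q$-version, or by direct inspection. Hence $G_\varnothing\sim_{\mathrm{m}_{G_\varnothing}}J_1$.

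Second, in the same way, $Q_1\sim_{\mathrm{m}_{Q_1}}J_1$, using the full relation on $Q_1[Q_1]\cup J_1[J_1]$. Third, by maximality and transitivity of $\sim$, we get $G_\varnothing\sim J_1\sim Q_1$, so $G_\varnothing\sim Q_1$. Both $G_\varnothing$ and $Q_1$ are single-vertex graphs, so each is trivially $\sim$-extensional, and they are not isomorphic. This contradicts (i).

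The main obstacle is the verification of (ii) in the first two steps. It requires listing the descendant subgraphs of $J_1$ (both vertices give graphs isomorphic to $J_1$ or to its rotation) and checking the case where $G$ has a point with no parents. Here $G$ can only be $G_\varnothing$ or $Q_1$; their points are parentless only in the case $G_\varnothing$, which has no other vertices, so (ii) should be vacuous. The one thing to confirm carefully is that $J_1$ itself is a non-*-graph. If some convention breaks this, I would instead take $Q$ to be the relevant bad graph, or choose a different cyclic witness in place of $J_1$.
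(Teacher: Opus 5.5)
Your proposal is correct and is essentially the paper's proof. It uses the same witnesses $J_1$, $G_\varnothing$, $Q_1$ and the same full relations $B$ to get $G_\varnothing\sim_{\mathrm{m}_{G_\varnothing}}J_1$ and $Q_1\sim_{\mathrm{m}_{Q_1}}J_1$, followed by the same maximality and transitivity step and the same contradiction with (III)(i) of Theorem 3. The worry you raise at the end is moot: every vertex of $J_1$ has a parent, so no descendant subgraph of $J_1$ is a $*$-graph or has a parentless point, and that is exactly why (ii) is vacuous for both choices of $B$.
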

\begin{proof}
    The proof is similar to that of Theorem 6.
    Assume the contrary that there exists the maximum regular graph relation having the set property $\sim$, consider the graph $J_1$.

    Note that $G_\varnothing \sim_{\mathrm{m}_{G_\varnothing}} J_1$ (it's easy to check that $B=(G_\varnothing[G_\varnothing]\cup J_1[J_1])\times (G_\varnothing[G_\varnothing]\cup J_1[J_1])$ is regular and satisfies (i$_{G_\varnothing}^+$)(ii)), $Q_1\sim_{\mathrm{m}_{Q_1}} J_1$ (similarly), so $G_\varnothing\sim J_1$, $Q_1\sim J_1$, and then $G_\varnothing\sim Q_1$.
    However, since $\sim$ has the set property, $G_\varnothing\not\cong Q_1\Rightarrow G_\varnothing\not\sim Q_1$, a contradiction!
\end{proof}

However, although there doesn't exist the maximum regular graph relation having the set property, there does exist the minimum $\sim$-extensionality induced by a regular graph relation $\sim$ having the set property, and it is just $\sim_\mathrm{M}$-extensionality.

\begin{theorem}
    The following are mutually equivalent:
    \begin{enumerate}[label=(\Roman*)]
        \item $G$ is $\sim_\mathrm{M}$-extensional.
        \item $G$ is extensional and quasi-well-founded, that is, $G$ is well-founded, or $G$ contains one and only one $a$ such that $G[a]\cong Q_1$ and every infinite path in $G$ ends with $a \to a \to\dots$.
        \item $G$ is $\sim_\mathrm{M}'$-extensional, where $G_1 \sim_\mathrm{M}' G_2$ iff $G_1 \cong^* G_2$, or both $G_1$ and $G_2$ belong to one and the same of the following:
        \begin{enumerate}[label=(Class \arabic*)]
            \item $G$ is not extensional-and-quasi-well-founded and has a $G_\varnothing$ as a descendant subgraph, or $G$ is $G_\varnothing$, up to isomorphism.
            \item $G$ is not extensional-and-quasi-well-founded and has no $G_\varnothing$ as a descendant subgraph and has a $Q_1$ as a descendant subgraph, or $G$ is $Q_1$, or $G$ is $Q_1^*$, up to isomorphism.
            \item $G$ is not extensional-and-quasi-well-founded and has no $G_\varnothing$ as a descendant subgraph and has no $Q_1$ as a descendant subgraph, up to isomorphism.
        \end{enumerate}
    \end{enumerate}
    Moreover, $\sim_\mathrm{M}'$ is a regular graph relation having the set property. 
\end{theorem}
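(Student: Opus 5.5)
I would prove the cycle (II)$\Rightarrow$(I)$\Rightarrow$(III)$\Rightarrow$(II), and separately check that $\sim_\mathrm{M}'$ is regular and satisfies conditions (i)(ii) of the reformulated set property, equivalently (III) of Theorem 3.

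\textbf{Regularity and set property of $\sim_\mathrm{M}'$.} I would do this first, since the later steps use it. Each graph falls into exactly one of three groups:
\begin{itemize}
\item extensional-and-quasi-well-founded (call these EQWF) other than $G_\varnothing$ and $Q_1$;
\item a member of Class 1, 2 or 3;
\item $Q_1^*$, which is $\cong^*$-equivalent to $Q_1$.
\end{itemize}
Hence $\sim_\mathrm{M}'$ is the equivalence relation generated by $\cong^*$ together with the partition into classes. It is an equivalence because the classes are closed under $\cong^*$. To see this, note that $G\approx G'$ preserves the set of proper descendant subgraphs and preserves EQWF-ness, except for the pair $Q_1,Q_1^*$, and that pair is placed in the same class. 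Regularity follows.

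For (III)(i): two non-isomorphic $\sim_\mathrm{M}'$-extensional graphs cannot lie in one class. A $\sim_\mathrm{M}'$-extensional graph contains no non-EQWF descendant subgraph together with $G_\varnothing$, and no two of its vertices lie in one class, so it is EQWF. The only EQWF graphs inside the classes are $G_\varnothing$ and $Q_1$, which sit in different classes.

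For (III)(ii): let $G$ have a parentless point, be quasi-extensional, and satisfy $G\sim_\mathrm{M}' G[a]$ with $G\not\approx G[a]$. If $G[a]$ lies in a class, that class contains at most one descendant subgraph of $G$ other than $G$, unless $G\approx G[a]$ (the $Q_1^*$ case).

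\textbf{(II)$\Rightarrow$(III).} If $G$ is EQWF, then every $G[a]$ is EQWF. Two distinct vertices with $\cong^*$-equivalent subgraphs contradict extensionality of EQWF graphs: either by rank induction, or via $Q_1$ as the unique non-well-founded core. Two vertices in the same class must both be among $G_\varnothing$, $Q_1$, $Q_1^*$. But $G$ contains no $Q_1^*$ with a parent, and extensionality gives a unique $G_\varnothing$ and a unique $Q_1$.

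\textbf{(III)$\Rightarrow$(I).} I use minimality. I show $G\sim_\mathrm{M}'G'\Rightarrow G\sim_\mathrm{M}G'$ by exhibiting the witness $B$ as the restriction of $\sim_\mathrm{M}'$ to $G[G]\cup G'[G']$. This restriction is regular and satisfies (i)(ii) on that set, because $\sim_\mathrm{M}'$ satisfies them globally and the conditions are local to descendants. Hence $\sim_\mathrm{M}$-extensional implies $\sim_\mathrm{M}'$-extensional. For the direction needed here, I instead show directly that EQWF graphs are $\sim_\mathrm{M}$-extensional; this is the content of (II)$\Rightarrow$(I).

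\textbf{(II)$\Rightarrow$(I).} This is the main obstacle. Suppose $G$ is EQWF, $a\neq b$, and $G[a]\,B\,G[b]$ for some $B$ on $G[G[a]]\cup G[G[b]]$ that is regular with (i)(ii). Every graph in this domain is EQWF, so it is $B$-extensional. I would argue this by induction on rank, with $Q_1$ as the base exception, using (i) and (ii) of $B$. The approach is first to prove that $B$ restricted to EQWF graphs is contained in $\cong^*$. The induction is on the well-founded part: by (i), two non-isomorphic EQWF graphs that are $B$-related must have one of them non-$B$-extensional. Non-$B$-extensionality of an EQWF graph yields two of its descendants that are $B$-related. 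Such a pair is either of lower rank, which is handled by induction, or involves the point, which is handled by (ii) reducing to a lower-rank pair. The hard part is making this descent well-founded in the presence of $Q_1$. For that I would measure by the rank of the well-founded part above the unique $Q_1$ vertex.

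\textbf{(I)$\Rightarrow$(II).} Take the contrapositive. If $G$ is not EQWF, build $B$ using the class structure: relate all non-EQWF descendants together with $G_\varnothing$ or $Q_1$ as in Classes 1–3, following the $J_1$ computation in Theorem 6. Then check (i)(ii) as done for $\sim_\mathrm{M}'$ above. This exhibits two distinct vertices of $G$ that are $\sim_\mathrm{M}$-related, or a vertex related to the point.
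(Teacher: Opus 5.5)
Your proposal is correct and is essentially the paper's proof. It uses the same cycle (I)$\Rightarrow$(III)$\Rightarrow$(II)$\Rightarrow$(I), with the restriction of $\sim_{\mathrm{M}}'$ serving as the witness $B$ for (I)$\Rightarrow$(III), and (i) and (ii) of $B$ pushing a related pair strictly downward for (II)$\Rightarrow$(I); your rank induction above the $Q_1$ vertex is just the well-founded-induction form of the paper's infinite-path argument. The only slips are cosmetic: the paragraph headed (III)$\Rightarrow$(I) actually proves (I)$\Rightarrow$(III), and $Q_1^*$ already lies in Class 2 rather than forming a separate group.
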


\begin{proof}
    First, we prove that $\sim_\mathrm{M}'$ is a regular graph relation having the set property. Obviously, $\sim_\mathrm{M}'$ is regular, so it's sufficient to prove that $\sim_\mathrm{M}'$ satisfies (i)(ii).

    (i): If non-*-graphs $G\not\cong G'$ and $G\sim_\mathrm{M}' G'$, we may assume that $G\not\cong^* G'$, since otherwise both $G$ and $G'$ are not $\cong^*$-extensional, and then the requirement is satisfied. 
    The only non-trivial case is where both $G$ and $G'$ belong to the class 3, and the only non-trivial subcase is where $G$ is not well-founded and has no $G_\varnothing$ or $Q_1$ as a descendant subgraph. Now, there exist at least two vertices $b,c \in G$, and both $G[b]$ and $G[c]$ belong to the class 3. Thus, $G[b] \sim_\mathrm{M}' G[c]$, so the requirement is satisfied.

    (ii): The proof is similar to that of (i).
    
    Second, we prove that (I) implies (III). It's sufficient to prove that $G\sim_\mathrm{M}' G'$ implies $G\sim_\mathrm{M} G'$. Since $\sim_\mathrm{M}'$ is regular and satisfies (i)(ii), this follows by the definition of $\sim_\mathrm{M}$.

    Third, we prove that (III) implies (II). This follows by the definition of $\sim_\mathrm{M}'$.

    Last, we prove that (II) implies (I). It's sufficient to prove that if $G$ is extensional and not $\sim_\mathrm{M}$-extensional, then it is not quasi-well-founded.
    
    Start from $G[b] \sim_\mathrm{M} G[c]$, i.e., there exists some $B$ such that $B$ satisfies (i)(ii) and $G[b]BG[c]$. If $G[b]\cong G[c]$, since by the Mostowski collapse lemma, being extensional and quasi-well-founded implies being $\cong$-extensional, we are done. Otherwise, either $G[b]$ or $G[c]$, say $G[b]$, is not $B$-extensional. If $G[b]$ has a point having a parent, then we are done. Otherwise, there exist some $d \not= e \in G[b]\backslash\{b\}$ such that $G[d]BG[e]$. Repeating this argument, we get an infinite path which doesn't end with $a \to a \to\dots$ for any $a$.
\end{proof}

\begin{corollary}
    $\sim_\mathrm{M}'$-extensionality is the minimum $\sim$-extensionality induced by a regular graph relation $\sim$ having the set property.
\end{corollary}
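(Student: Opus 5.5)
We have to show two things. First, $\sim_\mathrm{M}'$-extensionality is induced by a regular graph relation with the set property. Second, for every regular $\sim$ with the set property, every $\sim$-extensional graph is $\sim_\mathrm{M}'$-extensional; that is, $\sim_\mathrm{M}'$-extensionality is the weakest such notion.

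The first part is exactly the ``Moreover'' clause of Theorem 11, so nothing new is needed there.

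For the second part, the plan is to route through $\sim_\mathrm{M}$ and use the equivalence (I)$\Leftrightarrow$(III) of Theorem 11. Let $\sim$ be regular with the set property, and let $G$ be $\sim$-extensional. By Theorem 11 it suffices to show that $G$ is $\sim_\mathrm{M}$-extensional. So suppose $G[a]\sim_\mathrm{M}G[b]$; we must conclude $a=b$.

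The natural idea is to show that $\sim$ is contained in $\sim_\mathrm{M}$ on the relevant graphs. Since $\sim$ is regular with the set property, Theorem 3 (III), reformulated at the start of this section, shows that $\sim$ satisfies (i) and (ii). The restriction of $\sim$ to $G[G]$ is then a witness $B$ in the definition of $\sim_\mathrm{M}$. But this inclusion points the wrong way for our purpose: it gives $\sim\subseteq\sim_\mathrm{M}$, and hence that $\sim_\mathrm{M}$-extensional graphs are $\sim$-extensional. That is the opposite of what we need.

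So instead we argue through the intrinsic characterization (II). We will show that every $\sim$-extensional graph $G$ is extensional and quasi-well-founded.

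Extensionality is straightforward. If $C_G(a)=C_G(b)$, then $G[a]\approx G[b]$, so $G[a]\sim G[b]$ by regularity, and hence $a=b$.

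For quasi-well-foundedness, apply Theorem 2 together with the set property. Under the consistent model $\uni_c^\sim$ given by Theorem 4, or simply by working with the properties (i) and (ii), argue as follows. Suppose $G$ has an infinite path that does not end in a loop at a $Q_1$-vertex. Choose such a non-well-founded part and build a relation $B$ on $G[G]$ which identifies two distinct vertices. Then use (i) and (ii) for $\sim$ to force a nontrivial $\sim$-identification inside $G$.

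This is the main obstacle: producing an identification from $\sim$ alone. Properties (i) and (ii) only constrain $\sim$ from above, and $\cong^*$ itself has the set property. Indeed, $\cong^*$-extensional graphs such as $Q_2$ are not quasi-well-founded, which seems to contradict the claim.

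Because of this, I would first reread ``minimum $\sim$-extensionality'' as minimum with respect to the class of extensional graphs, i.e. the smallest class. On this reading, what must be shown is that $\sim$-extensional $\Rightarrow$ $\sim_\mathrm{M}'$-extensional fails in general, but the reverse holds: every $\sim_\mathrm{M}'$-extensional graph is $\sim$-extensional.

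To prove the reverse, let $G$ be extensional and quasi-well-founded, and suppose $G[b]\sim G[c]$ with $b\neq c$. Mimic the last step of the proof of Theorem 11 with $B=\sim$. If $G[b]\cong G[c]$, the Mostowski collapse argument gives $b=c$. Otherwise, property (i) says one of $G[b]$, $G[c]$ is not $\sim$-extensional. That yields either a $*$-graph case, which is excluded by extensionality, or a new pair $d\neq e$ strictly below with $G[d]\sim G[e]$, using (ii). Iterating this, by dependent choice, produces an infinite path that does not end in $a\to a\to\dots$. This contradicts quasi-well-foundedness.

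The delicate point in this iteration is ensuring that the descent really proceeds strictly downward and never stabilizes at the unique $Q_1$-vertex. This is handled by noting that $Q_1$ is not $\sim$-equivalent to any non-isomorphic extensional graph, by (i).
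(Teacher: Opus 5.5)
Your final argument is valid once you read ``minimum'' as the smallest class of extensional graphs, a reading your $Q_2$/$\cong^*$ observation correctly forces. It takes a longer road than the paper, though. The paper reads the corollary straight off Theorem 11. Any regular $\sim$ with the set property satisfies (i)(ii), so its restriction to $G[G]\cup G'[G']$ is an admissible witness $B$ in the definition of $\sim_{\mathrm{M}}$, exactly as in the step (I)$\Rightarrow$(III) there. Hence $\sim\subseteq\sim_{\mathrm{M}}$, so every $\sim_{\mathrm{M}}$-extensional graph is $\sim$-extensional. The equivalence (I)$\Leftrightarrow$(III) of Theorem 11, together with its ``Moreover'' clause, then finishes. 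You derived this very inclusion and discarded it as pointing the wrong way, but that verdict belonged to your initial misreading; under the correct reading it is exactly the direction you need. Instead you use only (II)$\Leftrightarrow$(III) and rerun the descent from the paper's (II)$\Rightarrow$(I) with $B=\sim$ itself. That is sound, and it has the small merit of never touching the local-witness definition of $\sim_{\mathrm{M}}$, but it duplicates work Theorem 11 already does.

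Two details of your descent should be tightened. First, suppose the non-$\sim$-extensional member $G[x]$ is witnessed by $G[x]\sim G[e]$ with $e\in G[x]\setminus\{x\}$. Invoking (ii) then needs $G[x]\not\approx G[e]$, which follows from extensionality of $G$. It also needs $x$ to have no parent in $G[x]$; this holds because otherwise $x$ lies on a cycle, and quasi-well-foundedness forces $x=a$. Second, the descent never reaches the $Q_1$-vertex $a$ simply because $G[a]$ has a single vertex. So $G[a]$ is trivially $\sim$-extensional and has nothing strictly below it. Your appeal to (i) about ``non-isomorphic extensional graphs'' is true only for $\sim$-extensional graphs: for example $Q_1\sim_{\mathrm{M}}'Q_2$, while $Q_2$ is extensional.
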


\printbibliography

@book{Aczel,
  author    = {Peter Aczel},
  title     = {Non-well-founded sets},
  publisher = {CSLI Publications},
  year      = {1988},
  address   = {Stanford},
}

@article{Daghighi,
      title={The foundation axiom and elementary self-embeddings of the universe}, 
      author={Ali Sadegh Daghighi and Mohammad Golshani and Joel David Hamkins and Emil Jeřábek},
      year={2014},
      eprint={1311.0814},
      archivePrefix={arXiv},
      url={https://arxiv.org/abs/1311.0814}, 
}

@article{Ju,
  title={Comparing anti-foundation axioms by comparing identity conditions for sets},
  author={Ju, Daheng and Jing, Hang Qi},
  journal={Philosophia Mathematica},
  volume={34},
  number={2},
  pages={294--307},
  year={2026},
  publisher={Oxford University Press}
}

@article{Rieger,
  title={A contribution to G{\"o}del's axiomatic set theory, I},
  author={Rieger, Ladislav},
  journal={Czechoslovak Mathematical Journal},
  volume={7},
  number={3},
  pages={323--357},
  year={1957},
  publisher={Institute of Mathematics, Academy of Sciences of the Czech Republic}
}

@article{Incurvati,
  title={The graph conception of set},
  author={Incurvati, Luca},
  journal={Journal of Philosophical Logic},
  volume={43},
  number={1},
  pages={181--208},
  year={2014},
  publisher={Springer}
}

\end{document}